\newtheorem{Theorem}{Theorem}[section]
\newtheorem{theorem}[Theorem]{Theorem}
\newtheorem{Proposition}[Theorem]{Proposition}
\newtheorem{lemma}[Theorem]{Lemma}
\newtheorem{Fact}[Theorem]{Fact}
\newtheorem{remark/def}[Theorem]{Remark/Definition}
\theoremstyle{definition}
\newtheorem{Example}[Theorem]{Example}
\newtheorem{Definition}[Theorem]{Definition}
\newtheorem{definition}[Theorem]{Definition}
\newtheorem{def/rem}[Theorem]{Definition/Remark}
\newtheorem{not/rem}[Theorem]{Notation/Remark}
\def \ind {\mathop{\smile \hskip -0.9em ^| \ }}
\def \dep {\mathop{ \not \smile \hskip -0.9em  ^| \ }}
\def \indk {\ind^K}
\def \depk {\dep^K}
\newcommand{\be}{\begin{enumerate}}
\newcommand{\bd}{\begin{defn}}
\newcommand{\bt}{\begin{theorem}}
\newcommand{\bl}{\begin{lemma}}
\newcommand{\ee}{\end{enumerate}}
\newcommand{\ed}{\end{defn}}
\newcommand{\et}{\end{theorem}}
\newcommand{\el}{\end{lemma}}
\newcommand{\la}{\langle}
\newcommand{\ra}{\rangle}
\newcommand{\CF}{{\mathcal F}}
\newcommand{\CG}{{\mathcal G}}
\newcommand{\CL}{{\mathcal L}}
\newcommand{\CH}{{\mathcal H}}
\newcommand{\CM}{{\mathcal M}}
\newcommand{\dom}{\mbox{dom}}
\newcommand{\NT}{\mathrm{NT}}
\def\dom{\operatorname{dom}}
\def\tp{\operatorname{tp}}
\title{More on  tree properties}
\author{Enrique Casanovas and  Byunghan Kim}
\date{June 14, 2019}
\address{Departament de Matem\`atiques i Inform\`atica\\
Universitat de Barcelona}
\address{Department of Mathematics\\
 Yonsei University}
\email{e.casanovas@ub.edu}
\email{bkim@yonsei.ac.kr}
\thanks{ The first author has been partially funded by a Spanish government grant MTM2017-86777-P and a Catalan  DURSI  grant 2017SGR-270. The second author has been supported by  Samsung Science Technology Foundation under Project Number SSTF-BA1301-03 and  an NRF of Korea grant 2018R1D1A1A02085584.}
\begin{document}
\maketitle
\begin{abstract}
Tree properties are introduced by Shelah, and it is well-known that 
a theory has TP (the tree property) if and only if it has
TP$_1$ or TP$_2$. 
In any simple theory (i.e., a theory not having TP), forking  supplies a good independence notion as it satisfies 
symmetry, full transitivity, extension, local character, and type-amalgamation, over sets. 
Shelah also introduced SOP$_n$ ($n$-strong order property).  Recently it is proved that  in any NSOP$_1$ theory (i.e. a 
theory not having SOP$_1$) holding  nonforking existence, Kim-forking also satisfies all the mentioned independence properties except base monotonicity (one direction of full transitivity). These results are the sources of motivation for this paper. 

Mainly, we produce type-counting criteria for SOP$_2$ (which is equivalent to TP$_1$) and  SOP$_1$. In addition, we study relationships between TP$_2$ and Kim-forking, and obtain that 
a theory  is supersimple iff there is no countably infinite  Kim-forking chain.  
\end{abstract}

In this paper we study various notions of tree properties, and we mainly produce type-counting criteria for SOP$_1$ and SOP$_2$. 
TP (the tree property) is introduced by S. Shelah in  \cite{S0}, and  it is shown that in any simple theory (a theory not having TP), 
forking satisfies
local character,  finite character, extension,  and later in  \cite{K},\cite{KP},
symmetry, full transitivity, and  type-amalgamation  of Lascar types, over arbitrary sets.

In \cite{S}, it is claimed that a theory has TP if and only if it has TP$_1$ or TP$_2$, and a complete proof is supplied in \cite{KKS}.
On the other hand, in \cite{S1}, Shelah  introduces the notions of $n$-strong order properties (SOP$_n$) for $n\geq 3$, which further classify  theories having 
TP$_1$. More precisely,  a theory has SOP$_n$ if there is a formula $\varphi(x,y)$ ($|x|=|y|$) defining a directed graph that has an infinite 
chain but no cycle of length $\leq n$. Hence SOP$_{n+1}$ implies SOP$_n$, but it is known that the implication is not reversible for each $n\geq 3$.
As we are not dealing with SOP$_n$ for $n\geq 3$ in this note, we do not give many details on this. 

\medskip

For $n=1,2,$ Shelah defines SOP$_n$ separately as follows.

 \begin{Definition}\be\item
We say a formula $\varphi(x,y)$ has {\em SOP$_2$} if
 there is a set $\{ a_\alpha\mid \alpha \in 2^{<\omega}\}$ of tuples such that 
 
 \be\item for each $\beta\in 2^\omega$, 
 $\{\varphi(x,a_{\beta\lceil n})\mid n\in\omega\}$
 is consistent, and 
 
 \item for each incomparable pair $\gamma,\gamma' \in  2^{<\omega}$,  
 $\{ \varphi(x, a_\gamma), \varphi(x, a_{\gamma'})\}$ is inconsistent.
\ee
A theory $T$ has SOP$_2$ if some formula in $T$ has SOP$_2$. 

\item
We say a formula $\varphi(x,y)$ has {\em SOP$_1$} if
 there is a set $\{ a_\alpha\mid \alpha \in 2^{<\omega}\}$ of tuples such that 
 \be\item for each $\beta\in 2^\omega$, 
 $\{\varphi(x,a_{\beta\lceil n})\mid n\in\omega\}$
 is consistent, and 
 
 \item for each $\beta \in  2^{<\omega}$, 
 $\{ \varphi(x, a_\gamma), \varphi(x, a_{\beta^\smallfrown 1})\}$ is inconsistent whenever $\beta^\smallfrown 0 \unlhd \gamma$. 
\ee
A theory $T$ {\em has SOP$_1$} if some  formula in $T$ has SOP$_1$. We say a theory $T$ {\em is NSOP$_1$} if $T$ does not have 
SOP$_1$. 
\ee
\end{Definition}

Hence it  follows that SOP$_2$ implies SOP$_1$. It is known for a theory that SOP$_3$ implies SOP$_2$, and SOP$_2$ is equivalent to TP$_1$. 
It is still an open question whether conversely, SOP$_1$ implies SOP$_3$, or SOP$_2$. The random parametrized equivalence relations (Example \ref{paraequi}), an infinite dimensional vector space over an algebraically closed field with a bilinear form, and 
 $\omega$-free PAC fields are typical examples having  non-simple but NSOP$_1$ theories. 
 Recently in \cite{KR},\cite{KR1},  it is shown that in any NSOP$_1$ theory, {\em over models},  `Kim-forking' satisfies all the aforementioned axioms that forking satisfies in simple theories, except base monotonicity (one direction of full 
 transitivity).  Then  it is proved in 
 \cite{DKR}, \cite{CKR} that the same axioms hold  {\em over arbitrary sets}  in any NSOP$_1$ theory having nonforking existence.    So far summarized  results justify our study of various tree properties in this paper.  
 
 \medskip 
 
 Throughout this note, we use standard notation. We work in a large saturated model $\CM$ of  a complete theory $T$ in a  language $\CL$, and $a,b,\dots$  ($A,B,\dots$) denote finite (small, resp.) tuples (sets, resp.) from $\CM$, unless said otherwise. 
 We write $a\equiv_A b$ to mean $\tp(a/A)=\tp(b/A)$. 
 As is customary, for cardinals $\kappa,\lambda$, we write $\lambda^\kappa$,  $\lambda^{<\kappa}$ to denote  $\{f\mid f: \kappa \to \lambda \}$, $\{f\mid f:\alpha \to \lambda, \ \alpha \in \kappa \}$ respectively,  or their  cardinalities, and it will be clear from context which one they mean. As usual, we can look  at $\lambda^{<\kappa}=\{f\mid f:\alpha \to \lambda, \ \alpha<\kappa \}$ as a tree, and we give a partial order $\unlhd$ to it. Namely we  let $\alpha\unlhd \beta$  for $\alpha, \beta\in \lambda^{<\kappa}$, when $\alpha=\beta\lceil\dom(\alpha)$. Thus we say $\alpha, \beta$ are {\em incomparable} if so are they in the ordering $\unlhd$. Also $\alpha^\smallfrown\beta$ denotes the concatenation of $\beta$ after $\alpha$.  When 
 $\beta=\la i_0,\dots, i_n\ra$ where $i_0,\dots,i_n\in \lambda$, we may simply write $\alpha i_0\cdots i_n$ to mean $\alpha^\smallfrown \beta$, so for example  $\alpha^\smallfrown 1$ or $\alpha 1$ indeed means $\alpha^\smallfrown\la 1\ra$. 
 In this note if we write a set as $\{p_i\mid i\in I\}$ then
 $p_i\ne p_j$ for $i\ne j\in I$. Given a sequence of tuples $\la c_i\mid i<\kappa\ra $ and $j<\kappa$, we write $c_{<j}$, $c_{>j}$ to abbreviate   $\la c_i\mid i<j\ra$, $\la c_i\mid j<i<\kappa \ra$, respectively.

\medskip

 We now state  definitions and facts including those already mentioned that will be freely used throughout the paper.

\begin{Definition} \be\item
We say an $\CL$-formula $\varphi(x,y)$ has the {\em $k$-tree property} ($k$-TP) where $k\geq 2$, if there is the set of tuples
$\{c_{\beta} \mid \beta\in \omega^{<\omega}\}$ (from $\CM$) such that
for each $\alpha\in \omega^{\omega}$, $\{\varphi(x,c_{\alpha\lceil
n})\mid n\in\omega\}$ is consistent, while for any $\beta\in \omega^{<\omega}$,
$\{\varphi(x,c_{\beta^\smallfrown i})\mid i\in\omega \}$ is
$k$-inconsistent (i.e. any $k$-subset is inconsistent). A formula has the {\em tree property} (TP) if it has $k$-TP for some $k\geq 2$. We say $T$ has TP if a formula in $T$ has this property. We say $T$ is {\em simple} if $T$ does not have TP. 

\item A formula $\psi(x,y)$ has the {\em tree property  of the first kind} (TP$_1$)
if there are
 tuples $a_{\alpha}$ ($\alpha\in \omega^{<\omega}$) such that   $\{\psi(x,a_{\beta\lceil
n})|n\in\omega\}$ is consistent for each $\beta\in \omega^{\omega}$,
while $\psi(x,a_{\alpha})\wedge
\psi(x,a_{\gamma})$ is inconsistent whenever  $\alpha,\gamma\in \omega^{<\omega}$
are incomparable. A theory has TP$_1$ if so has a formula.

\item We say  a formula $\psi(x,y)\in \CL$ has
 the {\em tree property of the second kind} (TP$_2$)
if there are tuples 
 $a^i_j$ ($i,j<\omega$) such that for each $i$,
 $\{ \psi(x,a^i_j) \mid j<\omega\}$ is 2-inconsistent, whereas for any $f\in \omega^{\omega}$,
  $\{ \psi(x,a^i_{f(i)}) \mid i<\omega\}$ is consistent. We say  $T$ has TP$_2$ if a formula has so in $T$.
\ee
\end{Definition}

 \begin{Fact} \label{sop2=tp1}
\be\item  The following are equivalent.
\be\item
 A theory $T$ has TP.
 \item $T$ has $2$-TP.
 \item $T$ has either TP$_1$ or TP$_2$.
 \ee

 \item
 A formula has TP$_1$ iff it has SOP$_2$.
 
 \item If a  formula has SOP$_1$ then it has $2$-TP.  
 \ee
 \end{Fact}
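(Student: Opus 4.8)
The plan is to exhibit a $2$-TP configuration for $\varphi$ \emph{directly} from an SOP$_1$ configuration $\{a_\alpha\mid \alpha\in 2^{<\omega}\}$, by reindexing it along an order-preserving encoding of $\omega^{<\omega}$ into $2^{<\omega}$ that converts the asymmetric SOP$_1$ inconsistency into full levelwise $2$-inconsistency. Writing $0^{i}$ for the string of $i$ zeros, I would define $e\colon \omega^{<\omega}\to 2^{<\omega}$ by
\[
e(\langle i_0,\dots,i_{n-1}\rangle)=0^{i_0}\,1\,0^{i_1}\,1\cdots 0^{i_{n-1}}\,1,
\]
so that each coordinate $i_k$ is written in unary as a block of $i_k$ zeros terminated by a single $1$. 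This $e$ is injective (the terminating $1$'s let one read the coordinates back off) and $\unlhd$-preserving, and I would set $c_\beta:=a_{e(\beta)}$ for $\beta\in\omega^{<\omega}$. The claim is that $\{c_\beta\mid \beta\in\omega^{<\omega}\}$ witnesses $2$-TP for $\varphi$; injectivity of $e$ together with distinctness of the $a_\alpha$ also guarantees the $c_\beta$ are distinct.

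For branch-consistency: given $\alpha\in\omega^{\omega}$, the images $e(\alpha\lceil 0)\unlhd e(\alpha\lceil 1)\unlhd\cdots$ form an increasing chain in $2^{<\omega}$ whose union is a single branch $b\in 2^{\omega}$ (each coordinate contributes at least its terminating $1$, so $b$ is infinite). Hence $\{\varphi(x,c_{\alpha\lceil n})\mid n\in\omega\}$ is a subset of $\{\varphi(x,a_{b\lceil m})\mid m\in\omega\}$, which is consistent by clause (a) of SOP$_1$; so every branch of the $c$-tree is consistent.

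For levelwise $2$-inconsistency, fix $\beta\in\omega^{<\omega}$ and distinct successors $\beta^\smallfrown i,\beta^\smallfrown j$, say $i<j$, and put $\delta=e(\beta)$ and $\mu=\delta\,0^{i}\in 2^{<\omega}$. Then $\mu^\smallfrown 1=\delta\,0^{i}\,1=e(\beta^\smallfrown i)$, while $\mu^\smallfrown 0=\delta\,0^{i+1}\unlhd \delta\,0^{j}\unlhd \delta\,0^{j}\,1=e(\beta^\smallfrown j)$ since $i+1\le j$. Applying clause (b) of SOP$_1$ at the node $\mu$ with $\gamma=e(\beta^\smallfrown j)$ (which satisfies $\mu^\smallfrown 0\unlhd\gamma$) yields that $\{\varphi(x,a_{e(\beta^\smallfrown j)}),\varphi(x,a_{\mu^\smallfrown 1})\}=\{\varphi(x,c_{\beta^\smallfrown j}),\varphi(x,c_{\beta^\smallfrown i})\}$ is inconsistent. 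As $i,j$ were arbitrary, $\{\varphi(x,c_{\beta^\smallfrown i})\mid i\in\omega\}$ is $2$-inconsistent, which completes the verification.

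The entire substance of the argument sits in the choice of encoding, so that is where I expect the only real difficulty to lie. The SOP$_1$ inconsistency is \emph{asymmetric} — a $1$-successor is barred only from the cone above its sibling $0$-successor — whereas $2$-TP demands \emph{symmetric} pairwise inconsistency among infinitely many siblings. The unary encoding resolves this by threading all $\omega$ successors of a node along a single $0$-spine, so that for $i<j$ the smaller-indexed successor plays the role of the ``$1$-branch'' and the larger-indexed one lies in the cone above the corresponding ``$0$-branch''. The one point needing care is that this lining-up must succeed for \emph{every} pair of siblings, not merely consecutive ones, and the inequality $i+1\le j$ is exactly what makes it go through. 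Notably, no compactness, indiscernibility, or appeal to Fact~\ref{sop2=tp1}(1)--(2) is required, since the construction is a pure reindexing of the given tuples.
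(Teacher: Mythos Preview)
Your argument for part~(3) is correct. The paper itself gives no detailed proof here, merely remarking that (2) and (3) ``easily come from the definitions,'' so your explicit unary encoding $e(\langle i_0,\dots,i_{n-1}\rangle)=0^{i_0}1\cdots 0^{i_{n-1}}1$ is exactly the sort of direct verification that remark invites; indeed, an analogous reindexing (with the blocks of zeros reversed in length) is used later in the paper in the proof of Fact~\ref{omom}.
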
 
 
 In Fact \ref{sop2=tp1}(1), the equivalence of (a) and (b)  is shown in \cite{S},
and that of (a) and (c)  is claimed in \cite{S}, but a correct proof is stated in \cite{KKS}.
  Fact \ref{sop2=tp1}(2)(3) easily come from the definitions.
 
\begin{Fact} \label{cr} \cite{CR} The following are equivalent. 
\be\item
A formula $\varphi(x,y)\in \CL$ has SOP$_1$.

\item There is a sequence $\la a_ib_i\mid i<\omega\ra$ such that 
\be\item $a_i\equiv_{(ab)_{<i}} b_i$ for all $i<\omega$, 

\item $\{\varphi(x,a_i)\mid i<\omega\}$ is consistent,  and 

\item  $\{\varphi(x,b_i)\mid i<\omega\}$ is $2$-inconsistent.
\ee\ee
\end{Fact}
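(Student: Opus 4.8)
The plan is to prove both directions, with almost all of the work in $(2)\Rightarrow(1)$; the direction $(1)\Rightarrow(2)$ is a clean extraction. Throughout write $0^i$ for the string of $i$ zeroes, so that the nodes of the $\unlhd$-tree $2^{<\omega}$ include the ``spine'' $\{0^i\mid i<\omega\}$ and the ``right turns'' $\{0^i1\mid i<\omega\}$.

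For $(1)\Rightarrow(2)$, I would first replace the SOP$_1$-witnessing tree $\{a_\eta\mid \eta\in 2^{<\omega}\}$ by a strongly indiscernible tree $\{c_\eta\mid\eta\in 2^{<\omega}\}$ with the same EM-type in the tree language $\{\unlhd,\ \wedge,\ \le_{\mathrm{lex}}\}$, via the modeling theorem for trees. Since both SOP$_1$ clauses are read off finite subconfigurations — consistency of initial segments of a branch, and inconsistency of the designated pairs — they are preserved, so $\{c_\eta\}$ still witnesses SOP$_1$. Now set $a_i:=c_{0^{i+1}}$ and $b_i:=c_{0^i1}$. Clause (b) is immediate: the $a_i$ lie on the branch $0^\omega$, so $\{\varphi(x,a_i)\mid i<\omega\}$ is consistent. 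For (c), if $i<j$ then $0^i0=0^{i+1}\unlhd 0^j\unlhd 0^j1$, so taking $\beta=0^i$ and $\gamma=0^j1$ the SOP$_1$ inconsistency clause gives that $\{\varphi(x,b_i),\varphi(x,b_j)\}$ is inconsistent. For (a), note that the map fixing the nodes of $(ab)_{<i}=\{0^{j+1},0^j1\mid j<i\}$ and swapping the siblings $0^i0=0^{i+1}\leftrightarrow 0^i1$ is a partial isomorphism in $\{\unlhd,\wedge,\le_{\mathrm{lex}}\}$ (both siblings have the same meet with, and the same $\le_{\mathrm{lex}}$-position relative to, every node branching off before level $i$), so strong indiscernibility yields $a_i=c_{0^{i+1}}\equiv_{(ab)_{<i}}c_{0^i1}=b_i$.

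For $(2)\Rightarrow(1)$, I would first extract an indiscernible sequence $\langle a_ib_i\mid i<\omega\rangle$ with the same EM-type; clauses (b),(c) are EM-type properties, and (a) transfers because it asserts $\tp(a_i/(ab)_{<i})=\tp(b_i/(ab)_{<i})$, which restricts correctly along any increasing reindexing. I would then record two consequences of (a). First, a \emph{cross-inconsistency}: for $i<j$ we have $b_i\in(ab)_{<j}$, so $a_j\equiv_{b_i}b_j$, whence $\{\varphi(x,a_j),\varphi(x,b_i)\}$ is consistent iff $\{\varphi(x,b_j),\varphi(x,b_i)\}$ is — and the latter fails by (c); so $\{\varphi(x,a_j),\varphi(x,b_i)\}$ is inconsistent. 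Second, a \emph{consistency/insertion} fact: for $i<j$ we have $a_i\in(ab)_{<j}$, so $b_j\equiv_{a_i}a_j$, and together with (b) this makes $\{\varphi(x,b_j),\varphi(x,a_i)\}$ consistent; more generally replacing any forward $a$ by its companion $b$ over the past preserves consistency of an $a$-chain.

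The remaining task, and the genuine obstacle, is to assemble these into the SOP$_1$ tree of clause (1), where \emph{every} branch must be consistent while each right child $c_{\beta^\smallfrown1}$ is $\varphi$-inconsistent with its entire sibling subtree $\{c_\gamma\mid \beta^\smallfrown0\unlhd\gamma\}$. The tension is that a branch may turn right infinitely often, yet two distinct genuine $b_i,b_j$ are $2$-inconsistent by (c); hence one cannot simply label nodes by literal members of a single indiscernible sequence. I would resolve this by a self-similar recursion on height in which the node values are automorphic images rather than literal $a_q,b_q$: using the automorphism witnessing (a) at each node to install its two children as the $a$- and $b$-versions over the transformed past, then building the sibling subtree as a ``future'' copy whose indices dominate that of the installed $b$ (so cross-inconsistency and (c) make every sibling node inconsistent with the right child), while the continuing branch is built as a consistent copy via the insertion fact, so that successive right turns along one branch live in distinct, mutually consistent scales. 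Coherence of the finite approximations plus compactness then yields the full $2^{<\omega}$-indexed tree. Verifying that this nesting simultaneously secures branch-consistency and sibling-inconsistency — i.e.\ choosing the index allocation and automorphisms so that the two requirements never collide — is the delicate heart of the argument; the two derived facts above are exactly what make it go through.
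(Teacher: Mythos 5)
This Fact is quoted from \cite{CR} (it is Proposition 5.2 there); the present paper gives no proof of its own, so your attempt can only be measured against the source. Your direction (1)$\Rightarrow$(2) is correct and is essentially the standard argument: pass to a strongly indiscernible tree by the modeling property, take the spine $c_{0^{i+1}}$ as the $a_i$ and the right turns $c_{0^i1}$ as the $b_i$, get (b) from branch-consistency, (c) from the SOP$_1$ clause with $\beta=0^i$, $\gamma=0^j1$, and (a) from the partial tree-isomorphism swapping the siblings $0^{i+1}$ and $0^i1$ over the earlier nodes. Your two derived facts for (2)$\Rightarrow$(1) --- the cross-inconsistency of $\varphi(x,a_j)$ with $\varphi(x,b_i)$ for $i<j$, and the insertion/consistency fact --- are also correct, and they are indeed the levers on which the real proof turns.

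However, the substance of (2)$\Rightarrow$(1) is the recursive construction of the tree, and you have not given it: you describe ``a self-similar recursion'' and then explicitly defer the verification that branch-consistency and sibling-inconsistency can be secured simultaneously, which is precisely the nontrivial part. This is a genuine gap, not a presentational one, since your two lemmas alone do not resolve the collision you yourself point out (a branch with two right turns cannot use two literal $b_i$'s). To close it one needs an explicit induction on height with a self-replicating hypothesis, e.g.: for all $n$ and $m$ there is a tree $(d_\eta)_{\eta\in 2^{\le n}\setminus\{\emptyset\}}$ each of whose nodes is the image of some $a_j$ or $b_j$ with $j>m$ under an automorphism fixing $(ab)_{\le m}$, each of whose full paths is consistent together with $\{\varphi(x,a_i)\mid i\le m\}$, and which satisfies the SOP$_1$ inconsistency clause. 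The step installs $d_{\langle 1\rangle}:=b_{m+1}$, places above $\langle 0\rangle$ a copy built from indices $>m+1$ (every node of which is then inconsistent with $\varphi(x,b_{m+1})$ by cross-inconsistency, clause (c), and the fact that the relevant automorphisms fix $b_{m+1}$), and places above $\langle 1\rangle$ the $\sigma$-image of such a copy for $\sigma$ fixing $(ab)_{\le m}$ with $\sigma(a_{m+1})=b_{m+1}$, which is what keeps right-turning branches consistent. Note also a point your sketch does not address: the siblings $d_{\langle 0\rangle}$ and $d_{\langle 1\rangle}$ must themselves be $\varphi$-inconsistent, and nothing in (a)--(c) makes $\varphi(x,a_m)\wedge\varphi(x,b_m)$ inconsistent, so the left child cannot be the companion $a_m$ of the right child $b_m$; the indices must be staggered. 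Until an induction of this kind is written out and checked, the hard direction remains unproved.
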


 In Section 1, we  supply  type-counting criteria for SOP$_2$. These are  generalizations of  those in
 \cite{KK}, and  we use  similar techniques in \cite{C} where analogous  criteria  for TP are stated. 
 
 In Section 2, in  parallel, we produce type-counting criteria for SOP$_1$.

 In Section 3, we study TP$_2$ in relation with Kim-independence and local weights. In particular we show that  $T$ is supersimple iff there is no  Kim-forking chain of length $\omega$.

 \section{Type-counting criteria for SOP$_2$}

When Shelah introduces the class of simple theories in \cite{S0}, he states and proves type-counting criteria for TP.
Then in \cite{C}, the first author improves those and suggests more  elaborate  criteria for TP.
Later in  \cite{KK}, type-counting criteria for TP$_1$ (equivalently  for SOP$_2$) analogous to the type-counting results of \cite{S0} are suggested. 
In \cite{MS},  another type-counting criteria for SOP$_2$ is suggested. 
Now in this section, we supply more refined criteria for SOP$_2$, which are analogous to those for TP in \cite{C}.

 \begin{definition}  \label{nt2} Let $\varphi(x,y)$ be an $\CL$-formula.  Assume  infinite cardinals $\kappa,\lambda$ are given. We define  
 $\NT_\varphi^2(\kappa,\lambda)$ as the supremum of cardinalities $|\mathcal{F}|$ of sets $\mathcal{F}$  of positive $\varphi$-types $p(x)$  over some fixed set $A$ of cardinality $\lambda$  satisfying  that 
  \begin{enumerate}
\item  $|p(x)|=\kappa$ for every $p(x)\in\mathcal{F}$, and 
\item for every subfamily $\{p_i\mid i<\lambda^+\}\subseteq \mathcal{F}$,   there are  disjoint subsets $\tau_j \subset \lambda^+$  with $|\tau_j|=\lambda^+$, and families
$\{p'_i\mid p'_i\subseteq p_i,\ i\in \tau_j\}$ $(j=0,1)$ such that  $|p_i\smallsetminus p^\prime_i| <\kappa$ for each $i\in\tau_0\cup \tau_1$,  and  every formula 
in $\bigcup_{i\in\tau_1}p^\prime_i$  is inconsistent with every formula in 
$\bigcup_{i\in\tau_0}p^\prime_i$.
\end{enumerate}
Notice that if $|\CF|\leq \lambda$ then the condition (2) is vacuous. 

We define $\NT^2(\kappa,\lambda)$ in a similar way, with the only difference that each partial type $p(x)\in \mathcal{F}$ (with finite $x$) may contain any formula over $A$,  not only instances of a fixed $\varphi(x,y)$, while still $|p(x)|=\kappa$.
\end{definition}

Now given a formula $\varphi$, we give type-counting criteria for SOP$_2$, in terms of NT$^2_\varphi$.

\begin{Theorem}\label{sopt4}  Let  $\kappa,\lambda$ denote  infinite cardinals. The following are equivalent for a formula  $\varphi(x,y)\in \CL$.
\begin{enumerate}
\item $\varphi(x,y)$  has SOP$_2$.

\item $\NT_\varphi^2(\omega,\omega)\geq \omega_1$

\item $\NT_\varphi^2(\omega,\omega)\geq  2^\omega.$

\item $\NT_\varphi^2(\kappa,\lambda)\geq  \lambda^+$ for some $\kappa, \lambda$.

\item $\NT_\varphi^2(\kappa,\lambda)\geq \lambda^+$ for any 
$\kappa,\lambda$ with
$\lambda^{<\kappa}=\lambda$ and $\lambda^\kappa>\lambda$.

\item $\NT_\varphi^2(\kappa,\lambda)\geq  \lambda^\kappa$ for any 
$\kappa,\lambda$ such that 
$\lambda^{<\kappa}=\lambda$ and $\lambda^\kappa>\lambda$.
\end{enumerate}
\end{Theorem}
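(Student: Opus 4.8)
The plan is to prove the six conditions equivalent by a single cycle of implications in which most links are immediate cardinal inequalities, and only two carry real content: the construction $(1)\Rightarrow(6)$ and the extraction $(4)\Rightarrow(1)$. First I would dispose of the easy links. Taking $(\kappa,\lambda)=(\omega,\omega)$ is legitimate in (5) and (6) since $\omega^{<\omega}=\omega$ and $\omega^\omega=2^\omega>\omega$; hence (6) specializes to $\NT^2_\varphi(\omega,\omega)\geq 2^\omega$, which is exactly (3), and (6) also gives $\NT^2_\varphi\geq\lambda^\kappa\geq\lambda^+$ because $\lambda^\kappa>\lambda$, which is (5). Then $(3)\Rightarrow(2)$ is $2^\omega\geq\omega_1$, while $(5)\Rightarrow(4)$ and $(2)\Rightarrow(4)$ are just the observation that each is a named instance of the existential statement (4). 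Thus once I have $(1)\Rightarrow(6)$ and $(4)\Rightarrow(1)$ the whole diagram $(1)\Rightarrow(6)\Rightarrow(3)\Rightarrow(2)\Rightarrow(4)\Rightarrow(1)$, with the side branch $(6)\Rightarrow(5)\Rightarrow(4)$, closes up.

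For $(1)\Rightarrow(6)$, fix $\kappa,\lambda$ with $\lambda^{<\kappa}=\lambda$ and $\lambda^\kappa>\lambda$; note this forces $\kappa\leq\lambda$. By Fact~\ref{sop2=tp1}(2) the SOP$_2$ of $\varphi$ coincides with TP$_1$, so I would first stretch the witnessing tree to the index set $\lambda^{<\kappa}$: using compactness it suffices to realize, over new parameters $\{a_\eta\mid\eta\in\lambda^{<\kappa}\}$, the requirements ``$\forall x\,\neg(\varphi(x,a_\eta)\wedge\varphi(x,a_{\eta'}))$'' for incomparable $\eta,\eta'$ and ``$\exists x\bigwedge_{m}\varphi(x,a_{\eta_m})$'' for finite chains; each finite such requirement involves only a finite subtree, which embeds into $\omega^{<\omega}$ preserving $\unlhd$ and incomparability, where the TP$_1$ configuration supplies the witnesses. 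Since $\lambda^{<\kappa}=\lambda$, the parameter set $A=\{a_\eta\}$ has size $\lambda$, and the branch types $p_\beta=\{\varphi(x,a_{\beta\lceil\alpha})\mid\alpha<\kappa\}$ ($\beta\in\lambda^\kappa$) form a family $\mathcal F$ of $\lambda^\kappa$ positive $\varphi$-types of size $\kappa$. To verify condition (2) of Definition~\ref{nt2}, given any $\lambda^+$ of these branches I would run a heavy-node argument: calling a node $\eta$ \emph{heavy} if $\lambda^+$ of the chosen branches pass through it, the assumption that every heavy node has at most one heavy successor confines all but at most one branch to pass through one of at most $\lambda\cdot\kappa=\lambda$ light successors, each carrying $\leq\lambda$ branches, forcing $\leq\lambda$ branches in all -- a contradiction. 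Hence some heavy $\eta$ at a level $\delta<\kappa$ has two heavy successors; taking $\tau_0,\tau_1$ to be the branches through these two successors and truncating each $p_\beta$ to the levels $\alpha>\delta$ (discarding $<\kappa$ formulas) makes every retained node on the $\tau_0$ side incomparable with every retained node on the $\tau_1$ side, so the corresponding instances are pairwise inconsistent, giving (6).

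For $(4)\Rightarrow(1)$, from $\NT^2_\varphi(\kappa,\lambda)\geq\lambda^+$ I would first extract a family $\{p_i\mid i<\lambda^+\}$ of size $\lambda^+$, writing $p_i=\{\varphi(x,a)\mid a\in D_i\}$ with $|D_i|=\kappa$, and then build an SOP$_2$ tree $\{c_\eta\mid\eta\in 2^{<\omega}\}$ by recursion on levels. I would maintain nested index sets $I_\eta\subseteq\lambda^+$ with $|I_\eta|=\lambda^+$, together with co-$(<\kappa)$ surviving subsets $E^\eta_i\subseteq D_i$ shrinking along $\unlhd$, subject to the invariant that each parameter $c_{\eta\lceil 1},\dots,c_{\eta\lceil\len(\eta)}$ on the path to $\eta$ lies in $D_i$ for all $i\in I_\eta$ and that every $E^\eta_i$ is contained in every ancestor truncation. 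To pass from $\eta$ to its children, apply Definition~\ref{nt2}(2) to $\{p_i\mid i\in I_\eta\}$ to obtain $\tau_0,\tau_1\subseteq I_\eta$ with cross-inconsistent truncations $\tilde D_i$; since $|A|=\lambda$ but $|\tau_0|=\lambda^+$, a pigeonhole (double counting $\sum_{i\in\tau_0}|E^\eta_i\cap\tilde D_i|\geq\lambda^+>\lambda$) yields a single parameter lying in the surviving truncation of $\lambda^+$-many types of $\tau_0$; set $c_{\eta 0}$ to be that parameter, let $I_{\eta 0}$ be those types, and symmetrically on the $\tau_1$ side. The invariant guarantees that each $c_\eta$ survives into every ancestor truncation on the correct side, so for incomparable $s,t$ with meet $u$ (say $u0\unlhd s$, $u1\unlhd t$) the instances $\varphi(x,c_s)$ and $\varphi(x,c_t)$ are inconsistent by the cross-inconsistency of the $u$-split, while along any branch each finite set of path-instances lies in a common $D_i$ and is therefore consistent, so by compactness every branch is consistent. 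This is exactly an SOP$_2$ pattern.

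The main obstacle, I expect, is the bookkeeping in the extraction $(4)\Rightarrow(1)$: one must simultaneously keep $\lambda^+$-many types alive (for the pigeonhole to fire at every node), keep all path parameters inside a common type (for branch consistency), and keep every chosen parameter inside the truncations of all ancestor splits on the correct side (for incomparable inconsistency), and these demands interact because each application of Definition~\ref{nt2}(2) both shrinks the types and re-partitions the index set without any control over which old indices survive. Defining $I_\eta$ to be precisely the indices whose types contain the current path, and threading the surviving sets $E^\eta_i$ through the successive truncations, is what reconciles them; the only cardinal arithmetic needed is that finitely many $(<\kappa)$-deletions leave a $\kappa$-sized set and that $\lambda^+$ types over $\lambda$ parameters force a heavy parameter. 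A secondary point requiring care is the stretching lemma in $(1)\Rightarrow(6)$, namely checking that finite subtrees of $\lambda^{<\kappa}$ embed into $\omega^{<\omega}$ so that compactness applies; this, together with the heavy-node count (which is where $\kappa\leq\lambda$ and $\lambda^{<\kappa}=\lambda$ are used), completes the cycle.
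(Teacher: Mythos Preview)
Your proposal is correct and follows essentially the same route as the paper: the same cycle of implications with only $(1)\Rightarrow(6)$ and $(4)\Rightarrow(1)$ carrying content, the former via compactness-stretching of the TP$_1$ tree plus the heavy-node count, and the latter via a recursive pigeonhole construction that tracks, at each $\sigma\in 2^{<\omega}$, a $\lambda^+$-sized index set together with co-$(<\kappa)$ truncations of the original types. Your bookkeeping with $E^\eta_i\subseteq D_i$ is exactly the paper's $p''_i=p_i\cap p'_i$ in parameter-set form, and your double-counting pigeonhole is a slightly heavier version of the paper's direct ``pick one formula from each and collapse over $|A|\leq\lambda$'' step, but the argument is the same.
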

\begin{proof}
(1)$\Rightarrow$(6) \ 
Assume $\varphi(x,y)$ has SOP$_2$. Suppose that 
for infinite $\kappa, \lambda$, we have  $\lambda^{<\kappa}=\lambda$ and  $\lambda^\kappa>\lambda$. 
Hence $\kappa\leq \lambda$. We will show that $\NT^2_\varphi(\kappa, \lambda)\geq \lambda^\kappa$. 

Since $\varphi$ has TP$_1$ as in Fact \ref{sop2=tp1}(2),
by compactness, there is  a tree of formulas
  $\{\varphi(x,a_\sigma)| \ \sigma\in\lambda^{<\kappa}\}$ witnessing  TP$_1$ w.r.t. $\lambda^{<\kappa}$\, (i.e. for each $\beta\in \lambda^\kappa$,  $q_{\beta}(x): = \{ \varphi (x, a_{\beta\lceil i})\mid i<\kappa \}$ is consistent,  while for any incomparable $\alpha,\gamma \in \lambda^{<\kappa}$, 
  $\{ \varphi(x,a_\alpha),\varphi(x,a_{\gamma})\}$ is inconsistent). 
  Let $A$ be the set of
parameters in the tree.
We let $\CF: = \{ q_{\beta}(x)\mid \beta \in \lambda^\kappa \}$. Note that 
$|\CF|=\lambda^\kappa>
\lambda=\lambda^{<\kappa}=|A|.$ 

\medskip

We want to show that $\CF$ satisfies the condition (2) in  Definition \ref{nt2}.
Thus assume a set $\CG=\{q_\beta\mid \beta\in\tau \}$ is given, where 
$\tau\subseteq \lambda^\kappa$ with $|\tau|=\lambda^+$. Now 
for each $\sigma\in \lambda^{<\kappa}$, we let
$\CG_\sigma:=\{p\in \CG \mid \varphi(x,a_{\sigma})\in p\}$.

\medskip

\noindent {\em Claim.}  There are $\mu\in \lambda^{<\kappa}$ and $s_0< s_1\in \lambda$ such that
$|\CG_{\mu^\smallfrown \la s_0\ra}|=|\CG_{\mu^\smallfrown\la s_1\ra}|=\lambda^+$:     Suppose  not. Then for each 
$\sigma\in \lambda^{<\kappa}$ there is at most one $s<\lambda$ such that  
$|\CG_{\sigma^\smallfrown\la s\ra}|=\lambda^+$. Thus the only possibility is that there is $\delta \in \lambda^\kappa$ such that 
for each $i \in \kappa$, $|\CG_{\delta \lceil i}|=\lambda^+$, while for each $j\in \lambda$ with $j\ne \delta (i)$, we have $|\CG_{(\delta \lceil i)^\smallfrown \la j\ra }|\leq \lambda$. Since 
$$\CG=\{q_\delta\}\cup \bigcup \{ \CG_{(\delta \lceil i)^\smallfrown \la  j_i\ra}\mid i<\kappa, \ j_i<\lambda, \ j_i\ne \delta(i)\},$$  it follows that  $|\CG|\leq 1+\lambda\cdot \lambda\cdot \kappa=\lambda$, a contradiction.
Hence the claim follows.

\medskip

Now let   $\tau_0,\tau_1$ be
the disjoint  subsets of $\tau$ indexing  the sets $\CG_{\mu^\smallfrown \la s_0\ra}$ and $\CG_{\mu^\smallfrown\la s_1\ra}$, respectively,
so $\CG_{\mu^\smallfrown \la s_j\ra}=\{p_i\in \CG\mid i\in \tau_j\}$ ($j=0,1$).
We now put for each $i\in \tau_0\cup \tau_1$, $p'_i:=p_i\smallsetminus q_\mu$ where 
$q_\mu=\{\varphi(x,a_\sigma)| \ \sigma \trianglelefteq \mu \}$.
Hence $|p_i\smallsetminus p'_i|<\kappa$.
Moreover clearly each formula in $ \bigcup_{i\in\tau_1} p'_i$ is inconsistent with every formula 
in $\bigcup_{i\in \tau_0} p'_i$.
Therefore Definition \ref{nt2}(2) holds.

\medskip

(6)$\Rightarrow$(5)$\Rightarrow$(2)$\Rightarrow$(4)
and (6)$\Rightarrow$(3)$\Rightarrow$(2) Clear.

\medskip

(4)$\Rightarrow$(1)
Assume  $\NT_\varphi^2(\kappa,\lambda)\geq \lambda^+$ for some infinite $\kappa$ and $\lambda$. Hence there is a family $\CF=\{q_i\mid i<\lambda^+\}$ over
 a set $A$ with $|A|\leq \lambda$ satisfying the condition in Definition \ref{nt2}(2). 
We will produce an SOP$_2$ tree for $\varphi$ from $\CF$.

\medskip

\noindent {\em Claim.} There exist a  function $f: 2^{<\omega}\to A$, a family $\{\, \CG_\sigma\mid \sigma\in  2^{<\omega}\, \}$ of types, and a family 
$\, \{\, \tau_\sigma \subseteq
 \lambda^+\mid \sigma\in  2^{<\omega} \}\, $ such that  for all $\sigma\in  2^{<\omega}$, 

\be
\item[(i)] $|\tau_{\sigma}|=\lambda^+$;  $\tau_{\sigma 0}$ and $\tau_{\sigma 1 }$ are disjoint subsets of $\tau_{\sigma}$,

\item[(ii)]  $\CG_\sigma $ is of the form $\{p_i\mid p_i\subseteq q_i,\ i\in \tau_\sigma\}$ (so $|
\CG_\sigma|=\lambda^+$) with
$|q_i\smallsetminus p_i|<\kappa$, and for $j\in \{0,1\}$, $\CG_{\sigma j}$ is of the form $\{p'_i\mid p'_i\subseteq p_i\in \CG_\sigma, \ i\in \tau_{\sigma j}\}$ with $|p_i\smallsetminus p'_i|<\kappa$, 

\item[(iii)]  for $a_\sigma:=f(\sigma)$ we have    $\varphi(x, a_\sigma)\in \bigcap \CG_\sigma$, and

\item[(iv)] each formula in   $\bigcup \CG_{\sigma0}$ 
 is inconsistent with every formula in $\bigcup\CG_{\sigma1}$.
\ee

\medskip

\noindent {\em Proof of Claim.}
We construct such a function and sets by induction on the length of $\sigma$. When $\sigma=\emptyset$, choose $\varphi(x,b_i)$ from each $q_i\in\CF$. Then 
since $|A| < \lambda^+$ and $\lambda^+$ is regular (or just by counting), there must be   a subset
$\tau_{\emptyset}\subseteq \lambda^+$ of size $\lambda^+$ such that
$b_i$ are equal (say, to $a_{\emptyset}$) for all $i\in \tau_{\emptyset}$. Then set $f(\emptyset)=a_\emptyset$. Also, set $\CG_{\emptyset}:=  \{q_i \mid i\in \tau_{\emptyset}\}$, so $\varphi(x,a_\emptyset)\in \bigcap \CG_\emptyset$.

\medskip
Assume now the induction hypothesis for $\sigma$. We will find
sets and  function values corresponding to
$\sigma 0$ and $\sigma 1$. Write  $\CG_\sigma=\{p_i\mid i\in \tau_\sigma\}$.
Since $\CF$ satisfies Definition \ref{nt2}(2), there exist disjoint subsets $\tau'_{\sigma j } \subseteq \tau_{\sigma}\, $ of size $\lambda^+$ ($j=0,1$) and a subset $p'_i\subseteq q_i$ with $|q_i \smallsetminus p'_i|<\kappa$ for each $i\in \tau'_{\sigma 0 }\cup \tau'_{\sigma 1 }$, such that  every formula in $\bigcup_{i\in \tau'_{\sigma0}}p'_i$ is  inconsistent with each formula in $\bigcup_{i\in \tau'_{\sigma1}}p'_i$.  
We now let $p''_i:=p_i\cap p'_i$ for $i\in \bigcup_{j=0,1}\tau'_{\sigma j }$, and let 
$\CG'_{\sigma j}:=\{p''_i \mid i\in \tau'_{\sigma j}\}$. Then clearly $p''_i\subseteq p_i$,  $|q_i \smallsetminus p''_i|<\kappa$, and $|p_i \smallsetminus p''_i|<\kappa$.

 Now since again $|A|\leq \lambda$,  for $j\in \{0,1\}$, 
there must be a set $\tau_{\sigma j}\subseteq \tau'_{\sigma j}$ with $|\tau_{\sigma j}|=\lambda^+$  such that   for some $d_j\in A$ (which we put $a_{\sigma j}=f(\sigma j)$),   $\varphi(x,d_j)\in \bigcap_{i\in \tau_{\sigma j}} p''_i$. Therefore if we let 
$\CG_{\sigma j}:=\{ p''_i\mid i\in \tau_{\sigma j}\}$, then  $\tau_{\sigma j }$, $f(\sigma j)$ and $\CG_{\sigma j}$, for $j=0, 1$, satisfy all the required conditions for the induction step, and the proof for Claim is complete.

\medskip

Now, using the properties  described in Claim, we see that  the tree $ \{\varphi(x,a_{\sigma})\mid  \sigma\in 2^{<\omega}\}$ witnesses SOP$_2$.
 Indeed given any  $\sigma,\beta,\gamma \in 2^{<\omega}$,
the formula $\varphi(x,a_{\sigma^\smallfrown 0^\smallfrown \beta})$ is inconsistent with  $\varphi(x,a_{\sigma^\smallfrown 1^\smallfrown  \gamma})$.
\end{proof}

We now give type-counting criteria for SOP$_2$, for a theory. 

\begin{Theorem}\label{sopt5} Let $\kappa,\lambda$ denote infinite cardinals.  The following are equivalent.
\be

\item $T$  has SOP$_2$.

\item For every regular  $\kappa>|T|$, there is  $\lambda\geq 2^\kappa$ such that  
$\NT^2(\kappa,\lambda)> \lambda$.

\item  For some   regular $\kappa>|T|$ and some $\lambda\geq 2^\kappa$, 
we have $\NT^2(\kappa,\lambda)> \lambda$.

\item For every $\kappa,\lambda$ with   $\lambda^{<\kappa}= \lambda$ and $\lambda^\kappa>\lambda$, 
we have     $\NT^2 (\kappa,\lambda)\geq \lambda^\kappa$.

\item For every $\kappa,\lambda$ with   $\lambda^{<\kappa}= \lambda$ and $\lambda^\kappa>\lambda$, 
we have     $\NT^2 (\kappa,\lambda)> \lambda$.
\ee
\end{Theorem}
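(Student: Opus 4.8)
The plan is to establish Theorem~\ref{sopt5} by reducing it, as far as possible, to the formula-level result of Theorem~\ref{sopt4}, and by mirroring its proof where a direct reduction is not available. Since $\NT^2$ counts arbitrary partial types rather than $\varphi$-types, the statement $T$ has SOP$_2$ means \emph{some} formula $\varphi$ has SOP$_2$, so one direction should come from Theorem~\ref{sopt4} applied to that $\varphi$ together with the obvious inequality $\NT^2(\kappa,\lambda)\geq \NT^2_\varphi(\kappa,\lambda)$ (any family of $\varphi$-types is in particular a family of partial types). The reverse direction, recovering a single SOP$_2$ formula from a large family of unrestricted partial types, is where the real work lies.

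First I would prove the implications that only require counting and the monotonicity $\NT^2\geq\NT^2_\varphi$. For (1)$\Rightarrow$(4): if $\varphi$ has SOP$_2$ then by Theorem~\ref{sopt4}(1)$\Rightarrow$(6) we have $\NT^2_\varphi(\kappa,\lambda)\geq\lambda^\kappa$ for all $\kappa,\lambda$ with $\lambda^{<\kappa}=\lambda$ and $\lambda^\kappa>\lambda$, hence $\NT^2(\kappa,\lambda)\geq\lambda^\kappa$. Then (4)$\Rightarrow$(5) is immediate since $\lambda^\kappa>\lambda$ in the hypothesis. For the bridge to (2) and (3), I would note that the cardinal arithmetic hypotheses are satisfiable: given regular $\kappa>|T|$, choosing $\lambda=\lambda^{<\kappa}\geq 2^\kappa$ (e.g.\ $\lambda=2^{<\kappa}$ suitably, or any $\lambda$ with $\lambda=\lambda^{<\kappa}$ and $\lambda^\kappa>\lambda$, which exists by standard arguments) gives $\NT^2(\kappa,\lambda)\geq\lambda^\kappa>\lambda$, so (4)$\Rightarrow$(2) and (5)$\Rightarrow$(3)$\Rightarrow$ the existential (3)-statement follow by exhibiting appropriate $\lambda$; I would be careful here to check that the required $\lambda\geq 2^\kappa$ with the arithmetic identities genuinely exists for every regular $\kappa>|T|$.

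The main obstacle is (3)$\Rightarrow$(1), extracting an SOP$_2$ formula from a family $\CF=\{p_i\mid i<\lambda^+\}$ of unrestricted partial types of size $\kappa$ over a set $A$ of size $\lambda$, with $\kappa>|T|$ regular and $\lambda\geq 2^\kappa$. The plan is to run the tree-construction of the Claim in the proof of Theorem~\ref{sopt4}(4)$\Rightarrow$(1), which builds a binary tree $\{\tau_\sigma,\CG_\sigma,f(\sigma)\mid\sigma\in 2^{<\omega}\}$ with the inconsistency property (iv), \emph{except} that now the chosen formulas $f(\sigma)$ need not be instances of one fixed $\varphi$. This is exactly where $\kappa>|T|$ and $\lambda\geq 2^\kappa$ are used: at each node, after splitting into $\tau'_{\sigma j}$ of size $\lambda^+$ and shrinking each $p_i$ by fewer than $\kappa$ formulas, I must find a \emph{single} formula lying in $\lambda^+$-many of the shrunk types. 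Since each type has size $\kappa$ and there are only $|A|^{<\kappa}\cdot|\CL|\leq\lambda^{\,|T|}$... more carefully, at most $\lambda$ choices of a formula over $A$ in the relevant language fragment when $|A|=\lambda\geq 2^\kappa\geq\kappa\geq|T|$, a counting/pigeonhole argument (the number of formulas appearing across the family is bounded suitably against $\lambda^+$) yields a common formula $\psi(x,f(\sigma))$; the hypothesis $\lambda\geq 2^\kappa$ guarantees enough room for this pigeonholing along the whole construction. Running this $\omega$ many times produces for each node a formula, but to get a \emph{single} SOP$_2$ formula I would finally apply pigeonhole once more over the $\leq|T|<\kappa\leq\lambda$ many formula-symbols appearing along the tree to pass to a subtree on which one fixed $\psi$ is used at every node; the inconsistency clause (iv) then certifies that $\psi$ has SOP$_2$ via the tree $\{\psi(x,f(\sigma))\mid\sigma\in 2^{<\omega}\}$, since any $\psi(x,a_{\sigma^\smallfrown 0^\smallfrown\beta})$ is inconsistent with $\psi(x,a_{\sigma^\smallfrown 1^\smallfrown\gamma})$.

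I expect the delicate point to be the two-layered pigeonhole: first selecting a common formula at each node, and then, across the infinitely many levels, stabilizing to one formula that works uniformly, all while keeping the deleted parts of each type of size $<\kappa$ so that the surviving types remain consistent. The regularity of $\kappa$ ensures a union of $<\kappa$-sized deletions along a branch of length $\kappa$ stays controlled, and $\lambda^+$ regular lets the node-splitting preserve size $\lambda^+$; I would verify that these two uses of regularity interact correctly and that $\lambda\geq 2^\kappa$ is exactly strong enough to bound the number of distinct formulas encountered so that one is repeated $\lambda^+$ times at each stage. Once the stabilized subtree is in hand, verifying the SOP$_2$ inconsistency condition is routine from (iv).
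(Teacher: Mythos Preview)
Your handling of the easy implications is fine and matches the paper: $(1)\Rightarrow(4)$ via Theorem~\ref{sopt4} and monotonicity $\NT^2\geq\NT^2_\varphi$, and $(4)\Rightarrow(5)$ trivially. For $(5)\Rightarrow(2)$ the paper simply takes $\lambda=\beth_\kappa(\kappa)$, which satisfies $\lambda^{<\kappa}=\lambda<\lambda^\kappa$ and $\lambda\geq 2^\kappa$; your vaguer remark here is not a real problem.

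The substantive divergence is in $(3)\Rightarrow(1)$, and your final step contains a genuine gap. You propose to run the tree construction of Theorem~\ref{sopt4} with a possibly different formula $\psi_\sigma$ at each node $\sigma\in 2^{<\omega}$, and then ``apply pigeonhole once more \dots\ to pass to a subtree on which one fixed $\psi$ is used at every node.'' But a node-colouring of $2^{<\omega}$ by $|T|$ colours need \emph{not} admit a monochromatic copy of $2^{<\omega}$ preserving comparability and incomparability: e.g.\ colour every node at level $n$ by the $n$th formula. So the subtree extraction, as stated, fails. One can try to repair this by building a much taller tree and doing more work, but nothing in your sketch indicates how.

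The paper avoids this entirely by doing the pigeonhole \emph{before} the tree construction. Enumerate each $q_i$ as $\{\varphi^i_\alpha(x,a^i_\alpha)\mid\alpha<\kappa\}$. Since $|T|^\kappa=2^\kappa\leq\lambda<\lambda^+$, some $\tau\subseteq\lambda^+$ of size $\lambda^+$ has all $q_i$ ($i\in\tau$) sharing the \emph{same} formula-sequence $\langle\varphi_\alpha\mid\alpha<\kappa\rangle$ (this is exactly where $\lambda\geq 2^\kappa$ is used). Then, since $\kappa>|T|$ is regular, some $\mu\subseteq\kappa$ of size $\kappa$ has $\varphi_\alpha=\varphi$ constant on $\mu$. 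Replacing each $q_i$ by $\{\varphi(x,a^i_\alpha)\mid\alpha\in\mu\}$ yields a family $\CF_1$ of positive $\varphi$-types that still satisfies Definition~\ref{nt2}(2) (the required $p_i'$ restrict), so $\NT^2_\varphi(\kappa,\lambda)\geq\lambda^+$ and Theorem~\ref{sopt4}(4)$\Rightarrow$(1) finishes. This two-step pigeonhole---first on the $\kappa$-sequence of formula symbols, then inside the sequence---is the missing idea, and it cleanly explains why both hypotheses $\lambda\geq 2^\kappa$ and $\kappa>|T|$ regular are needed in (3).
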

\begin{proof} 
(1)$\Rightarrow$(4) \ The same proof of (1)$\Rightarrow$(6) for Theorem \ref{sopt4} shows this.

 \medskip

(2)$\Rightarrow$(3),  (4)$\Rightarrow$(5)  \  Clear.

\medskip

(3)$\Rightarrow$(1) \  Assume (3) with the given $\kappa,\lambda$. Hence there is a family $\CF$ of arbitrary  types over $A$ with $|A|=\lambda$, 
satisfying  the conditions (2) and (3) in Definition \ref{nt2}. There is no harm to assume that  $|\CF|=\lambda^+$ and we write $\CF=\{q_i\mid i<\lambda^+\}.$ Since $|q_i|=\kappa$,
we write $q_i=\{\varphi^i_\alpha(x,a^i_\alpha)\mid \alpha <\kappa\},$ where $a^i_\alpha \in A$. Now since $|T|^\kappa=2^\kappa<\lambda^+$, there must be a subset 
$\tau$ of $\lambda^+$ with $|\tau|=\lambda^+$ such that the sequence 
$\la \varphi^i_\alpha(x,y^i_\alpha)\mid \alpha <\kappa\ra$ stays the same, say $\la \varphi_\alpha(x,y_\alpha)\mid \alpha <\kappa\ra$,  for every
$i\in \tau$. 
Moreover since  $\kappa(>|T|)$ is regular, there must be a subset $\mu\subseteq \kappa$ of size $\kappa$ such that  $\varphi_\alpha(x,y_\alpha)$ stays the same, say $\varphi(x,y)$, for all $\alpha\in\mu$.  Now we let 
$\CF_1:=\{\{\varphi(x, a^i_\alpha)\mid \alpha\in\mu\}\mid i\in \tau\}.$ Then it easily follows that $\CF_1$ also satisfies  Definition \ref{nt2}(1) and (2).
Moreover each type in $\CF_1$ is a positive $\varphi$-type. Therefore (1) follows
by Theorem \ref{sopt4}(4)$\Rightarrow$(1).

 \medskip

(5)$\Rightarrow$(2) \  Assume (5). Now given regular $\kappa> |T|$, let 
$\lambda:=\beth_\kappa(\kappa)$. Then $\lambda^{<\kappa}= \lambda <\lambda^\kappa$. Hence by (5), we have  $\NT^2(\kappa,\lambda)> \lambda$.
\end{proof}

\section{Type-counting criteria for SOP$_1$}

As said in the beginning of Section 1, type-counting criteria for  SOP$_2$ are given in \cite{KK} as well. 
But for the first time, here we  state and prove type-counting criteria for a formula to have SOP$_1$.

\begin{Definition}
We say a formula $\varphi(x,y)$ has {\em $\omega^{<\omega}$-SOP$_1$} if
 there is a set $\{ a_\alpha\mid \alpha \in \omega^{<\omega}\}$ of tuples such that 
 \be\item for each $\beta\in \omega^\omega$, 
 $\{\varphi(x,a_{\beta\lceil n})\mid n\in\omega\}$
 is consistent, and 
 
 \item for each $\beta \in  \omega^{<\omega}$ and each pair $m<n\in\omega$, 
 $\{ \varphi(x, a_\gamma), \varphi(x, a_{\beta n})\}$ is inconsistent whenever $\beta m\unlhd \gamma$. 
\ee
\end{Definition}

\begin{Fact}\label{omom}
A formula has SOP$_1$ iff it has $\omega^{<\omega}$-SOP$_1$.
\end{Fact}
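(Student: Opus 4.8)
The plan is to prove Fact~\ref{omom} by showing both implications, where the forward direction is essentially definitional and the reverse direction requires a Ramsey/compactness argument to compress an $\omega^{<\omega}$-tree into a $2^{<\omega}$-tree.

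For the easy direction, suppose $\varphi(x,y)$ has $\omega^{<\omega}$-SOP$_1$, witnessed by $\{a_\alpha\mid \alpha\in\omega^{<\omega}\}$. I would exhibit a copy of $2^{<\omega}$ inside $\omega^{<\omega}$ realizing the SOP$_1$ configuration from the Definition. The natural embedding $e\colon 2^{<\omega}\to\omega^{<\omega}$ sends a binary string to itself (reading $0,1$ as elements of $\omega$); then condition (1) transfers immediately since every branch of $2^{<\omega}$ extends to a branch of $\omega^{<\omega}$. For condition (2) of SOP$_1$, I must check that whenever $\sigma^\smallfrown 0\unlhd\gamma$ in $2^{<\omega}$, the formulas $\varphi(x,a_{e(\gamma)})$ and $\varphi(x,a_{e(\sigma^\smallfrown 1)})$ are inconsistent. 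Taking $\beta=e(\sigma)$, $m=0$, $n=1$ in the $\omega^{<\omega}$-SOP$_1$ condition gives exactly this, since $e(\sigma^\smallfrown 0)=\beta 0\unlhd e(\gamma)$. So the restriction to the binary subtree directly witnesses SOP$_1$.

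For the harder converse, suppose $\varphi$ has SOP$_1$, witnessed by $\{a_\alpha\mid\alpha\in 2^{<\omega}\}$. I need to build an $\omega^{<\omega}$-indexed tree satisfying the stronger inconsistency pattern (every large immediate successor is inconsistent with everything below a smaller one). The idea is to make the indexing set richer by passing to indiscernible-type data: using SOP$_1$ together with a standard tree-indiscernibility extraction (modelled on the techniques already invoked for Fact~\ref{sop2=tp1} and Fact~\ref{cr}), I would first stretch the given binary tree to one indexed by $\lambda^{<\omega}$ for large $\lambda$ while preserving the SOP$_1$ inconsistency relation, so that at each node I have $\lambda$-many immediate successors arranged like the original $0/1$ split iterated. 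The key observation is that in the SOP$_1$ condition the roles of $0$ and $1$ are asymmetric, and I want an $\omega$-branching analogue where, along a single node $\beta$, the successors $\beta m$ for $m<\omega$ play the roles of nested "$1$-branches" so that $\varphi(x,a_{\beta n})$ is inconsistent with everything at or below $\beta m$ for $m<n$. I would realize this by taking, over the binary tree, a sequence of successive $0$-then-$1$ patterns: define $a'_{\beta^\smallfrown\langle n\rangle}$ from the binary witness by routing through $n$ consecutive left-moves followed by a right-move, i.e.\ embedding $\omega^{<\omega}$ into $2^{<\omega}$ by $\langle n_0,\dots,n_k\rangle\mapsto 0^{n_0}1\,0^{n_1}1\cdots 0^{n_k}1$, and setting $a'_\alpha:=a_{\iota(\alpha)}$ for this embedding $\iota$.

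With this embedding $\iota\colon\omega^{<\omega}\to 2^{<\omega}$ in hand, I would verify the two clauses of $\omega^{<\omega}$-SOP$_1$. Consistency along branches (clause (1)) follows because each $\beta\in\omega^\omega$ maps to a branch $\iota(\beta)\in 2^\omega$, and SOP$_1$ clause (1) guarantees consistency along that binary branch. For clause (2), fix $\beta\in\omega^{<\omega}$ and $m<n$; I must show $\{\varphi(x,a'_\gamma),\varphi(x,a'_{\beta^\smallfrown\langle n\rangle})\}$ is inconsistent whenever $\beta^\smallfrown\langle m\rangle\unlhd\gamma$. Unwinding $\iota$, the node $\iota(\beta^\smallfrown\langle m\rangle)=\iota(\beta)^\smallfrown 0^m 1$ while $\iota(\beta^\smallfrown\langle n\rangle)=\iota(\beta)^\smallfrown 0^n 1$; writing $\sigma:=\iota(\beta)^\smallfrown 0^m$, we have $\sigma^\smallfrown 1=\iota(\beta^\smallfrown\langle m\rangle)\unlhd\iota(\gamma)$ and $\sigma^\smallfrown 0\unlhd\iota(\beta)^\smallfrown 0^n 1=\iota(\beta^\smallfrown\langle n\rangle)$, so applying SOP$_1$ clause (2) with base node $\sigma$ yields the desired inconsistency between $\varphi(x,a_{\iota(\gamma)})$ and $\varphi(x,a_{\iota(\beta^\smallfrown\langle n\rangle)})$. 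The main obstacle is checking that the comparability bookkeeping under $\iota$ lines up exactly with the asymmetric $0/1$ roles in the SOP$_1$ definition; once the embedding is pinned down as above this reduces to the routine verification just sketched, so no extraction or Ramsey step is actually needed—the pure combinatorial embedding suffices.
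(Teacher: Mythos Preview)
Your verification of clause~(2) in the forward direction is flawed. With $\sigma:=\iota(\beta)^\smallfrown 0^m$ you correctly note that $\sigma^\smallfrown 1=\iota(\beta^\smallfrown\langle m\rangle)\unlhd\iota(\gamma)$ and $\sigma^\smallfrown 0\unlhd\iota(\beta^\smallfrown\langle n\rangle)$, but the SOP$_1$ clause at base $\sigma$ only asserts that $\varphi(x,a_{\sigma^\smallfrown 1})$ itself is inconsistent with $\varphi(x,a_\delta)$ for $\delta\unrhd\sigma^\smallfrown 0$; it says nothing about \emph{proper descendants} of $\sigma^\smallfrown 1$. So your application of SOP$_1$ yields only the inconsistency of $\varphi(x,a_{\iota(\beta^\smallfrown\langle n\rangle)})$ with $\varphi(x,a_{\sigma^\smallfrown 1})=\varphi(x,a_{\iota(\beta^\smallfrown\langle m\rangle)})$, that is, only the case $\gamma=\beta^\smallfrown\langle m\rangle$. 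For $\gamma$ properly extending $\beta^\smallfrown\langle m\rangle$ you get nothing.

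This is not a fixable bookkeeping issue: no single embedding $\iota\colon\omega^{<\omega}\to 2^{<\omega}$ can do the job from the SOP$_1$ axioms alone. To make the argument go through one needs, for each $n$, that $\iota(\beta^\smallfrown\langle n\rangle)$ be an \emph{immediate} $1$-child, say $\iota(\beta^\smallfrown\langle n\rangle)=\sigma_n^\smallfrown 1$, with $\sigma_n^\smallfrown 0\unlhd\iota(\beta^\smallfrown\langle m\rangle)$ for every $m<n$. But then $\sigma_{n+1}^\smallfrown 0\unlhd\iota(\beta^\smallfrown\langle n\rangle)=\sigma_n^\smallfrown 1$ forces $\sigma_{n+1}^\smallfrown 0\unlhd\sigma_n$, hence $|\sigma_{n+1}|<|\sigma_n|$: an infinite strictly decreasing sequence of lengths. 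The paper sidesteps this obstruction by building, for each finite $n$, an embedding $f_n\colon n^{<\omega}\to 2^{<\omega}$ via $f_n(\alpha^\smallfrown\langle m\rangle):=f_n(\alpha)^\smallfrown 0^{\,n-m-1}1$ (note the \emph{larger} child receives \emph{fewer} zeros, the reverse of your convention), so that $f_n(\alpha^\smallfrown\langle m'\rangle)$ is always an immediate $1$-child sitting above the $0$-side containing $f_n(\alpha^\smallfrown\langle m\rangle)$ for $m<m'$; then compactness produces the $\omega^{<\omega}$-tree. The compactness step is essential, and your concluding remark that ``no extraction or Ramsey step is actually needed'' is incorrect.
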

\begin{proof}
$(\Leftarrow)$ Clear.

\medskip

$(\Rightarrow)$ Assume $\varphi(x,y)$ and 
$\{a_\alpha \mid \alpha \in 2^{<\omega}\}$ witness SOP$_1$. Now for each $n>1$, define a $1-1$ map $f_n: n^{<\omega}\to 2^{<\omega}$ such that  $f_n(\emptyset):=\emptyset$, and for $\alpha \in n^{<\omega}$ and $m<n$,  $f_n(\alpha m):= f_n(\alpha)\overbrace{0\cdots0}^{n-m-1}1$.

It follows that 
$A_n:=\{a_{f_n(\alpha)}\mid \alpha\in n^{<\omega} \}$ forms an 
$n^{<\omega}$-SOP$_1$ tree for $\varphi$, and then compactness yields an $\omega^{<\omega}$-SOP$_1$ tree for the formula.
\end{proof}

\begin{Definition}\label{nt1}
Let $\varphi(x,y)\in \CL$.  For any two infinite cardinals $\kappa,\lambda$, we define  $\NT_\varphi^1(\kappa,\lambda)$ as the supremum of cardinalities $|\mathcal{F}|$ of sets $\mathcal{F}$  of positive $\varphi$-types   
over some fixed set $A$ of cardinality $\lambda$  satisfying  that  
\be
\item  $|q(x)|= \kappa$ for every $q(x)\in\mathcal{F}$, and 
\item given any subfamily $\CG=\{q_i\mid i<\lambda^+\}$ of $\CF$ and a family $\CG'=
\{p_i\mid p_i\subseteq q_i, \ i<\lambda^+\}$ where $|q_i\smallsetminus p_i|<\kappa$
for each $i<\lambda^+$, 
there are disjoint subsets $\tau_0, \tau_1$ of $\lambda^+$
with $|\tau_j|=\lambda^+$ ($j=0,1$), and $\CG'_j=\{p'_i\mid p'_i\subseteq p_i, \ i\in\tau_j\}$ with $|p_i\smallsetminus p'_i|<\kappa$  for each $i\in \tau_0\cup \tau_1$,
such that for every $p'_i\in \CG'_1$ there is a formula  in $ p'_i$ which  is inconsistent with each formula in  $\bigcup \CG'_0$.
\ee
Notice that if $|\CF|\leq \lambda$ then the condition (2) is vacuous. 
\end{Definition}

\begin{Theorem}\label{positivephi}
Assume $\varphi(x,y)$ is an $\CL$-formula, and $\kappa,\lambda$ denote infinite cardinals.  The following are equivalent.
\be
\item $\varphi(x,y)$ has  SOP$_1$.

\item $\NT_\varphi^1(\omega,\omega)\geq \omega_1$

\item $\NT_\varphi^1(\omega,\omega)\geq  2^\omega.$

\item $\NT_\varphi^1(\kappa,\lambda)\geq  \lambda^+$ for some $\kappa, \lambda$.

\item $\NT_\varphi^1(\kappa,\lambda)\geq \lambda^+$ for any 
$\lambda$  and any regular $\kappa$ with
$\lambda^{<\kappa}=\lambda$ and $\lambda^\kappa>\lambda$.

\item $\NT_\varphi^1(\kappa,\lambda)\geq  \lambda^\kappa$ for any 
$\lambda$  and regular $\kappa$  such that 
$\lambda^{<\kappa}=\lambda$ and $\lambda^\kappa>\lambda$.
\ee
\end{Theorem}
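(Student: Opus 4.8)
The plan is to prove the six conditions equivalent by running the cycle $(1)\Rightarrow(6)\Rightarrow(5)\Rightarrow(2)\Rightarrow(4)\Rightarrow(1)$ together with $(6)\Rightarrow(3)\Rightarrow(2)$, in close parallel with the proof of Theorem \ref{sopt4}, so that all the genuinely new work concentrates in $(1)\Rightarrow(6)$ and $(4)\Rightarrow(1)$. The implications among the numerical clauses are immediate: $(6)\Rightarrow(5)$ because $\lambda^\kappa>\lambda$ forces $\lambda^\kappa\ge\lambda^+$; $(5)\Rightarrow(2)$ and $(6)\Rightarrow(3)$ by instantiating $\kappa=\lambda=\omega$ (here $\omega$ is regular, $\omega^{<\omega}=\omega$, and $\omega^\omega=2^\omega>\omega$); and $(3)\Rightarrow(2)\Rightarrow(4)$ are trivial. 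So I concentrate on the two substantial implications.

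For $(4)\Rightarrow(1)$ I would reproduce the inductive tree construction of Theorem \ref{sopt4}, building a tree $\{\varphi(x,a_\sigma)\mid\sigma\in 2^{<\omega}\}$ from a family $\CF=\{q_i\mid i<\lambda^+\}$ over $A$ (with $|A|\le\lambda$) witnessing Definition \ref{nt1}(2). By induction on $\len(\sigma)$ I produce index sets $\tau_\sigma$ of size $\lambda^+$ with $\tau_{\sigma0},\tau_{\sigma1}$ disjoint in $\tau_\sigma$, refinements $\CG_\sigma=\{p_i\subseteq q_i\mid i\in\tau_\sigma\}$ with $|q_i\setminus p_i|<\kappa$, and parameters $a_\sigma=f(\sigma)\in A$ with $\varphi(x,a_\sigma)\in\bigcap\CG_\sigma$. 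The decisive feature of Definition \ref{nt1}(2) is that it accepts a \emph{pre-refined} family, so at the inductive step I may apply it directly to the already-shrunk $\CG_\sigma$; this gives disjoint $\tau'_0,\tau'_1$ and refinements $\CG'_0,\CG'_1$ so that each type in $\CG'_1$ carries a formula inconsistent with all of $\bigcup\CG'_0$. A pigeonhole over $A$ (of size $<\lambda^+$) then fixes a single $a_{\sigma1}$ whose formula does this for $\lambda^+$ indices, and a second pigeonhole fixes $a_{\sigma0}$ with $\varphi(x,a_{\sigma0})$ in $\lambda^+$ of the $\CG'_0$-types. Since every node below $\sigma0$ only further refines $\CG_{\sigma0}$, every formula occurring in the subtree below $\sigma0$ lies in $\bigcup\CG'_0$ and hence is inconsistent with $\varphi(x,a_{\sigma1})$, which is exactly the inconsistency clause of SOP$_1$. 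Consistency along a branch $\beta$ is automatic: for any $N$ and any $i\in\tau_{\beta\lceil N}$, all of $\varphi(x,a_{\beta\lceil n})$ $(n\le N)$ lie in the single consistent type $q_i$, since each stage-$n$ type is a subset of $q_i$; regularity of $\kappa$ keeps $|q_i\setminus p_i|<\kappa$ along the finite branch.

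For $(1)\Rightarrow(6)$ I fix regular $\kappa$ and $\lambda$ with $\lambda^{<\kappa}=\lambda<\lambda^\kappa$. By Fact \ref{omom} and compactness I spread an $\omega^{<\omega}$-SOP$_1$ witness over $\lambda^{<\kappa}$, obtaining $\{a_\sigma\mid\sigma\in\lambda^{<\kappa}\}$ with each branch type $q_\beta=\{\varphi(x,a_{\beta\lceil i})\mid i<\kappa\}$ $(\beta\in\lambda^\kappa)$ consistent and $\{\varphi(x,a_\gamma),\varphi(x,a_{\eta^\smallfrown n})\}$ inconsistent whenever $\eta^\smallfrown m\unlhd\gamma$ and $m<n$. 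Then $\CF=\{q_\beta\}$ has $|\CF|=\lambda^\kappa>\lambda=|A|$. Given a subfamily $\CG$ indexed by $\tau\subseteq\lambda^\kappa$ with $|\tau|=\lambda^+$ and a refinement $p_\beta$, the set $S_\beta$ of \emph{active} levels $i$ (those with $\varphi(x,a_{\beta\lceil i})\in p_\beta$) is cofinal in $\kappa$ by regularity of $\kappa$. The configuration I aim to extract is a node $\mu$ and a label $s_0$ such that $\mu^\smallfrown s_0$ is \emph{heavy} (carries $\lambda^+$ branches), giving $\tau_0$, and $\lambda^+$ branches pass through $\mu$ and are active at level $\len(\mu)+1$ with label $>s_0$, giving $\tau_1$; for such a branch the surviving formula $\varphi(x,a_{\mu^\smallfrown t})$ $(t>s_0)$ conflicts with every $\varphi(x,a_\gamma)$, $\gamma\unrhd\mu^\smallfrown s_0$, by the displayed inconsistency with $\eta=\mu$, and stripping each $\tau_0$-type down to nodes $\unrhd\mu^\smallfrown s_0$ (deleting $<\kappa$ formulas) makes $\bigcup\CG'_0$ consist only of such $\gamma$.

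The hard part will be exactly this extraction, which is where SOP$_1$ departs from the SOP$_2$ argument: because the adversary supplies $p_\beta$ in advance, a purely branching choice of $\mu,s_0$ may land on a splitting node where the needed higher-label formula has been deleted on all $\lambda^+$ relevant branches. To overcome this I would run an activity-aware counting argument. Passing to the subtree of heavy nodes costs only $\le\lambda$ branches; inside it let $\ell(\mu)$ be the least label of a heavy successor of $\mu$. Assuming the target configuration fails, for every heavy $\mu^\smallfrown s_0$ at most $\lambda$ branches through $\mu$ are active at level $\len(\mu)+1$ with label $>s_0$; hence only the leftmost successor can carry $\lambda^+$ active branches, so all but $\le\lambda$ branches are \emph{spine-active}, taking the label $\ell(\beta\lceil\ell)$ at every active level $\ell+1$. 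But a spine-active branch is determined by its values at its inactive successor levels, of which there are $<\kappa$ (again by regularity of $\kappa$); thus it is coded by a partial function $\kappa\rightharpoonup\lambda$ of domain size $<\kappa$, and there are only $\lambda^{<\kappa}=\lambda$ of these. This forces $|\tau|\le\lambda$, a contradiction, and so yields the desired $\mu,s_0$. I expect this partial-function coding of the spine-active branches, together with the pre-refinement bookkeeping in $(4)\Rightarrow(1)$, to be the two places demanding care; the rest is the counting already carried out for Theorem \ref{sopt4}.
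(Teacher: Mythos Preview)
Your cycle of implications and your treatment of $(4)\Rightarrow(1)$ are essentially the paper's own proof: the inductive construction of $\tau_\sigma$, $\CG_\sigma$, $a_\sigma$ via repeated application of Definition~\ref{nt1}(2) to the already-refined family, followed by two pigeonholes over $A$, is exactly what the paper does. (One small correction: you invoke regularity of $\kappa$ to keep $|q_i\setminus p_i|<\kappa$ along a finite branch, but that holds for any infinite $\kappa$ since a finite union of sets of size $<\kappa$ has size $<\kappa$; condition~(4) does not assume $\kappa$ regular.)

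For $(1)\Rightarrow(6)$, your argument is correct but takes a genuinely different and more elaborate route than the paper. The paper uses regularity of $\kappa$ in a sharper way: since the inactive levels of $\beta$ form a subset of $\kappa$ of size $<\kappa$, regularity makes this set \emph{bounded}, so there is $i_\beta<\kappa$ with $\{\varphi(x,a_{\beta\lceil i})\mid i\ge i_\beta\}\subseteq p_\beta$. Two pigeonholes (first on $i_\beta$, using $\kappa<\lambda^+$; then on $\beta\lceil i_0$, using $\lambda^{<\kappa}=\lambda$) yield $\lambda^+$ branches that share a common initial segment $\sigma_0$ and are \emph{fully active} above it. From that point the problem collapses to the SOP$_2$ Claim of Theorem~\ref{sopt4}: find $\mu\unrhd\sigma_0$ with two heavy successors $\mu^\smallfrown s_0$, $\mu^\smallfrown s_1$ ($s_0<s_1$), set $p'_i=p_i\setminus q_\mu$, and the single formula $\varphi(x,a_{\mu^\smallfrown s_1})$ works uniformly for all of $\tau_1$. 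Your heavy-subtree/spine-active/partial-function coding avoids this normalization, and in fact never really uses boundedness of the inactive set (only its size $<\kappa$), so it is in a sense more robust; but the paper's two pigeonholes are considerably shorter and make the parallel with Theorem~\ref{sopt4} transparent.
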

\begin{proof} 
(1)$\Rightarrow$(6) \ 
Assume $\varphi(x,y)$ has SOP$_1$. Suppose that 
for regular  $\kappa$, and infinite $\lambda$, we have  $\lambda^{<\kappa}=\lambda$ and  $\lambda^\kappa>\lambda$. We will show that $\NT^1_\varphi(\kappa, \lambda)\geq \lambda^\kappa$. 

Since $\varphi$ has $\omega^{<\omega}$-SOP$_1$ as in Fact \ref{omom},
by compactness, there is  a tree of formulas
  $\{\varphi(x,a_\sigma)| \ \sigma\in\lambda^{<\kappa}\}$ witnessing  SOP$_1$ w.r.t. $\lambda^{<\kappa}$\, (i.e. for each $\beta\in \lambda^\kappa$,  $q_{\beta}(x): = \{ \varphi (x, a_{\beta\lceil i})\mid i<\kappa \}$ is consistent,  while for any $\alpha\in \lambda^{<\kappa}$ and $u<v\in\lambda$, 
  $\{ \varphi(x,a_\gamma),\varphi(x,a_{\alpha^\smallfrown\la v\ra})\}$ is inconsistent for any  $\gamma \unrhd \alpha^\smallfrown\la u\ra$). Let $A$ be the set of
parameters in the tree.
We let $\CF: = \{ q_{\beta}(x)\mid \beta \in \lambda^\kappa \}$. Note that 
$|\CF|=\lambda^\kappa>
\lambda=\lambda^{<\kappa}=|A|.$

\medskip

We want to show that $\CF$ satisfies the condition (2) in  Definition \ref{nt1}, Thus assume a set $\CG=\{p_\beta\subseteq q_\beta\mid \beta\in\tau \}$ is given where $|q_\beta\smallsetminus p_\beta|<\kappa$ and $\tau\subseteq \lambda^\kappa$ with $|\tau|=\lambda^+$.  Since $|q_\beta\smallsetminus p_\beta|<\kappa$ and $\kappa$ is regular, for each $\beta\in \tau$, there must exist an ordinal $i_\beta<\kappa$ such that $\{\varphi(x,a_{\beta\lceil i}) \mid i_\beta\leq i<\kappa\} \subseteq p_\beta$. Note that  $\lambda^{<\kappa}=\lambda$ implies $\kappa < \lambda^+$. Thus  there exists a subset $\tau'' \subseteq \tau$ 
of size $\lambda^+$ such that $i_\beta$ stays the same, say $i_0$ for every $\beta\in \tau''$. Once more, since $\lambda^{<\kappa}=\lambda$,  for some subset $\tau' \subseteq \tau''$ 
of size $\lambda^+$, $\beta\lceil i_0$ stays the same for every $\beta\in\tau'$. Namely, there is 
$\sigma_0\in \lambda^{<\kappa}$ such that $\sigma_0=\beta\lceil i_0$ (and hence $a_{\sigma_0}=a_{\beta\lceil i_0}$) for all 
$\beta\in \tau'$.

Now let $\CG':=\{ p_\beta\in \CG\mid \beta \in \tau'\}$, and  for $\sigma(\unrhd \sigma_0)\in \lambda^{<\kappa}$, we let
$\CG'_\sigma:=\{p\in \CG' \mid \varphi(x,a_{\sigma})\in p\}$.

\medskip

\noindent {\em Claim.}  There are $\mu(\unrhd \sigma_0)\in \lambda^{<\kappa}$ and $s_0< s_1\in \lambda$ such that
$|\CG'_{\mu^\smallfrown \la s_0\ra}|=|\CG'_{\mu^\smallfrown\la s_1\ra}|=\lambda^+$:         Suppose  not. Thus for each $\sigma\unrhd \sigma_0\in \lambda^{<\kappa}$ there is at most one $s<\lambda$ such that  
$|\CG'_{\sigma^\smallfrown\la s\ra}|=\lambda^+$. Then it lead a contradiction  by the similar cardinality computation in the proof 
of Claim in that of Theorem \ref{sopt4} (1)$\Rightarrow$(6). Hence the claim follows.

\medskip

Now let   $\tau_0,\tau_1$ be
the disjoint  subsets of $\tau'$ indexing  the sets $\CG'_{\mu^\smallfrown \la s_0\ra}$ and $\CG'_{\mu^\smallfrown\la s_1\ra}$, respectively,
so $\CG'_{\mu^\smallfrown \la s_j\ra}=\{p_i\in \CG'\mid i\in \tau_j\}$ ($j=0,1$).
We now put for each $i\in \tau_0\cup \tau_1$, $p'_i:=p_i\smallsetminus q_\mu$ where 
$q_\mu=\{\psi(x,a_\sigma)| \ \sigma \trianglelefteq \mu \}$.

Notice that the formula $\varphi(x,a_{\mu s_1})\in \bigcap_{i\in\tau_1} p'_i$ is inconsistent with any formula 
in $\bigcup_{i\in \tau_0} p'_i$.
Hence Definition \ref{nt1}(2) holds.

\medskip

(6)$\Rightarrow$(5)$\Rightarrow$(2)$\Rightarrow$(4)
and (6)$\Rightarrow$(3)$\Rightarrow$(2) Clear.

\medskip

(4)$\Rightarrow$(1) Assume  $\NT_\varphi^1(\kappa,\lambda)\geq \lambda^+$ for some infinite $\lambda$ and $\kappa$. Hence there is a family $\CF=\{q_i\mid i<\lambda^+\}$ over
 a set $A$ with $|A|\leq \lambda$ satisfying the conditions  Definition \ref{nt1}(1) and (2). 
We will produce an SOP$_1$ tree for $\varphi$ from $\CF$.

\medskip

\noindent {\em Claim.} There exist a  function $f: 2^{<\omega}\to A$, a family $\{\, \CG_\sigma\mid \sigma\in  2^{<\omega}\, \}$ of families of types, and a family 
$\, \{\, \tau_\sigma \subseteq
 \lambda^+\mid \sigma\in  2^{<\omega} \}\, $ such that, for all  $\sigma\in  2^{<\omega}$, 

\be
\item[(i)] $|\tau_{\sigma}|=\lambda^+$;  $\tau_{\sigma 0}$ and $\tau_{\sigma 1 }$ are disjoint subsets of $\tau_{\sigma}$,

\item[(ii)]  $\CG_\sigma $ is of the form $\{p_i\mid p_i\subseteq q_i,\  i\in \tau_\sigma\}$ (so $|
\CG_\sigma|=\lambda^+$) with
$|q_i\smallsetminus p_i|<\kappa$, and for $j\in \{0,1\}$, $\CG_{\sigma j}$ is of the form $\{p'_i\mid p'_i\subseteq p_i\in 
\CG_\sigma, \  i\in \tau_{\sigma j}\}$ with $|p_i\smallsetminus p'_i|<\kappa$,

\item[(iii)] for $a_\sigma:=f(\sigma)$ we have    $\varphi(x, a_\sigma)\in \bigcap \CG_\sigma$,  
and  $\varphi(x, a_{\sigma1})\in \bigcap\CG_{\sigma 1}$  is inconsistent with every formula in $\bigcup\CG_{\sigma0}$.
\ee

\medskip

\noindent {\em Proof of Claim.}
We construct such a function and sets by induction on the length of $\sigma$. When $\sigma=\emptyset$, choose $\varphi(x,b_i)$ from each $q_i\in\CF$. Then 
since $|A| \leq  \lambda$, there is  a subset
$\tau_{\emptyset}\subseteq \lambda^+$ of size $\lambda^+$ such that
the $b_i$ are equal (say, to $a_{\emptyset}$) for all $i\in \tau_{\emptyset}$. Then set $f(\emptyset)=a_\emptyset$. Also, set $\CG_{\emptyset}:=  \{q_i \mid i\in \tau_{\emptyset}\}$, so $\varphi(x,a_\emptyset)\in \bigcap \CG_\emptyset$.

\medskip
Assume now the induction hypothesis for $\sigma$. We will find
sets and  function values corresponding to
$\sigma 0$ and $\sigma 1$. Write  $\CG_\sigma=\{p_i\mid i\in \tau_\sigma\}$.
Since $\CF$ satisfies Definition \ref{nt1}(2), there exist disjoint subsets $\tau'_{\sigma j } \subseteq \tau_{\sigma}\, $ of size $\lambda^+$ ($j=0,1$) and a subset $p'_i\subseteq p_i$ with $|p_i \smallsetminus p'_i|<\kappa$ for each $i\in \tau'_{\sigma 0 }\cup \tau'_{\sigma 1 }$, such that for every $p'_i\in \CH_1$, there is a formula $\varphi(x,a'_i)\in p'_i$ inconsistent with each formula in $\bigcup \CH_0$, where
 $\CH_j=\{ p'_i \mid i\in \tau'_{\sigma j}\}$. 
 
 Now since again $|A|\leq \lambda$,  
there must be a set $\tau_{\sigma 1}\subseteq \tau'_{\sigma 1}$ with $|\tau_{\sigma 1}|=\lambda^+$  such that   $a'_i$ are all equal for all $i\in \tau_{\sigma 1}$, which we put $f(\sigma 1)=a_{\sigma 1}$. Thus if we let 
  $\CG_{\sigma 1}:=\{ p'_i\mid i\in \tau_{\sigma 1}\}$, then $\varphi(x,a_{\sigma 1})\in \bigcap \CG_{\sigma1}$ is inconsistent with each formula in $\bigcup \CH_0$. Similarly if we choose $\varphi(x, b'_i)\in q'_i\in \CH_0$, there must be a subset $\tau_{\sigma 0}\subseteq \tau'_{\sigma 0}$ of size $\lambda^+$ such that $b'_i$ stays the same for each $i\in \tau_{\sigma 0}$, which  we let $f(\sigma 0)= a_{\sigma 0}$.  Then let  $\CG_{\sigma 0}:=\{p'_i\mid i\in \tau_{\sigma 0}\}$, so $\varphi(x, a_{\sigma 0})\in \bigcap\CG_{\sigma 0}$. Therefore,  $\tau_{\sigma j }$, $f(\sigma j)$ and $\CG_{\sigma j}$, for $j=0, 1$, satisfy all the required conditions for the induction step, and the proof for Claim is complete.
\medskip

Now, using the properties  described in Claim, we see that  the tree $ \{\varphi(x,a_{\sigma})\mid  \sigma\in 2^{<\omega}\}$ witnesses SOP$_1$.
 Indeed given any $\sigma \in 2^{<\omega}$,
the formula $\varphi(x,a_{\sigma 1})$ is inconsistent with any $\varphi(x,a_\gamma)$ where $\gamma\unrhd \sigma 0$.
\end{proof}

We finish this section by asking the following: Given a theory, are there criteria for SOP$_1$ analogous to Theorem \ref{sopt5} for SOP$_2$?

\section{Kim-forking and TP$_2$}

We begin this section by recalling basic definitions. 

\begin{definition}\label{kimfking}
\be
\item We say  a formula $\varphi(x,a_0)$ {\em divides over}  a set $A$, if there is an $A$-indiscernible sequence $\la a_i\mid i<\omega\ra$
such that $\{\varphi(x,a_i)\mid i<\omega\}$ is inconsistent.  A formula {\em forks} over $A$ if the formula implies a finite disjunction of formulas, each of which divides over $A$.  A type {\em divides/forks} over $A$ if the type implies a formula which divides/forks over $A$. We write $a\ind_AB$  ($a\ind^d_A B$) if $\tp(a/AB)$ {\em does not fork } ({\em divide}, resp.) over $A$.

\item An $A$-indiscernible sequence $\la a_i\mid i<\omega\ra$ is said to be a {\em Morley sequence over} $A$ if
$a_i\ind_A a_{<i}$ holds for each $i<\omega$.  

\item We say a formula $\varphi(x,a_0)$ {\em Kim-divides over} $A$ if $\{\varphi(x,a_i)\mid i<\omega\}$ is inconsistent for some Morley sequence $\la a_i\mid i<\omega \ra$ over $A$.  A formula {\em Kim-forks} over $A$ if the formula implies a finite disjunction of formulas, each of which Kim-divides over $A$. 
\item A type {\em Kim-divides/forks} over $A$ if the type implies a formula which Kim-divides/forks over $A$. We write $c\ind^K_AB$ if $\tp(c/AB)$ {\em does not Kim-fork } over $A$. Hence $\ind\Rightarrow \ind^K\mbox{ and } \ind^d$.
\ee
\end{definition}

Originally in \cite{KR}, the notion of {\em Kim-dividing} is introduced {\em over a model}, using the notion of a Morley sequence in a global invariant extension of a type over the model. There it is shown that, over a model, that notion is equivalent to the one stated in Definition \ref{kimfking}(3).   Since  in general even in a simple theory, there need not exist a global invariant extension of a type over a set,  instead in \cite{DKR}  the above definition in (3) is coherently given  as {\em Kim-dividing over an arbitrary set}.

As is well-known, in any simple $T$, $\ind$ satisfies symmetry, full transitivity (that is: for any $d$ and $A\subseteq B\subseteq C$, $d\ind_AB$ and $d\ind_BC$ iff $d\ind_AC$), extension, local character, finite character, and
$3$-amalgamation of Lascar types.  Moreover  in such $T$, $\ind=\ind^d=\ind^K$ \cite{K}, and nonforking existence (that is: $d\ind_AA$ for any $d$ and $A$) holds.
As we will not deal with these facts, see \cite{C1} or  \cite{K1}  for more details. Further advances are discovered in \cite{KR},\cite{KR1},\cite{DKR},\cite{CKR} recently.  Namely, it is shown that  in any NSOP$_1$ $T$ having nonforking existence (as said any simple $T$, and all the known NSOP$_1$ $T$ have this),    the notions of Kim-forking and Kim-dividing coincide, and $\ind^K$ supplies a good independence notion since it satisfies all the aforementioned properties that  hold of $\ind$  in simple theories, except  base monotonicity (so  there can exist $d$ and $A\subseteq B\subseteq C$ such that $d\ind^K_AC$ but $d\dep^K_BC$ holds). 
 
 \medskip 
 
 In this section we study TP$_2$ in relation with Kim-forking. In particular we show that 
 if $T$ has TP$_2$ then there is a non-continuous Kim-forking chain of arbitrarily large length (Proposition \ref{kfkseq}),   
 by which we prove that $T$ is supersimple iff there is no Kim-forking chain of length $\omega$ (Theorem \ref{ssimple}).  
We also show that in any $T$ holding TP$_2$,  there is a type having  arbitrarily large local weight 
with respect to $\ind^K$ (Proposition \ref{lcwt}).

This section might be considered as an expository note, since all the results in this section are more or less straightforward  consequences of known facts \ref{nsop1cblelc} and \ref{tp2}. In particular,  
the referee of this paper points out to us that Proposition  \ref{kfkseq} follows from a result in \cite{Ch}.

\medskip 
Recall that a sequence $\la A_i\mid i<\kappa\ra$ of sets is said to be 
 {\em continuous} if 
for each limit $\delta<\kappa$, 
  $A_\delta=\bigcup_{i<\delta} A_i$.

 \begin{Fact} \cite{KRS} \label{nsop1lc}
 The following are equivalent.
 \be\item
 $T$ is NSOP$_1$.
 
 \item
  There do not exist finite $d$ and a continuous  increasing  sequence  $\la M_i\mid i<|T|^+\ra $ of $|T|$-sized models 
  such that 
  for each  $i<|T|^+$,   $d\depk_{M_{i}}M_{i+1}$.

\ee
\end{Fact}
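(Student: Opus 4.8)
The plan is to read Fact~\ref{nsop1lc} as a local-character statement for Kim-forking over models and to prove each implication by contraposition. Thus I would show, on one side, that SOP$_1$ manufactures a Kim-forking chain of length $|T|^+$ (the failure of local character), and on the other, that the existence of such a chain forces SOP$_1$ (so no chain exists when $T$ is NSOP$_1$). Throughout I would freely use Definition~\ref{kimfking} for Kim-dividing via Morley sequences, Fact~\ref{omom} to pass between SOP$_1$ and $\omega^{<\omega}$-SOP$_1$, and the configuration of Fact~\ref{cr}.

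For (2)$\Rightarrow$(1) I prove the contrapositive: if $T$ has SOP$_1$ then a chain as in (2) exists. Suppose $\varphi(x,y)$ has SOP$_1$, and take the sequence $\la a_ib_i\mid i<\omega\ra$ of Fact~\ref{cr}, with $a_i\equiv_{(ab)_{<i}}b_i$, with $\{\varphi(x,a_i)\mid i<\omega\}$ consistent and $\{\varphi(x,b_i)\mid i<\omega\}$ $2$-inconsistent. First I would stretch this to an array indexed by $|T|^+$ by compactness and, via Erd\H{o}s--Rado, pass to a sufficiently indiscernible version still satisfying the three conditions. Then I would build a continuous increasing chain of $|T|$-sized models $\la M_i\mid i<|T|^+\ra$ absorbing the initial segments $(ab)_{<i}$, and fix $d\models\{\varphi(x,a_i)\mid i<|T|^+\}$, arranged so that a formula of $\tp(d/M_{i+1})$ Kim-divides over $M_i$: here the homogeneity condition $a_i\equiv_{(ab)_{<i}}b_i$ is what lets one spin the relevant parameter into a Morley sequence over $M_i$, while the $2$-inconsistency supplies the inconsistency along it. Taking unions at limit stages gives continuity, so $d\depk_{M_i}M_{i+1}$ for every $i$, and (2) fails.

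For (1)$\Rightarrow$(2) I again argue contrapositively: a Kim-forking chain of length $|T|^+$ forces SOP$_1$. Given finite $d$ and a continuous increasing $\la M_i\mid i<|T|^+\ra$ with $d\depk_{M_i}M_{i+1}$ for all $i$, I first reduce to a single formula. For each $i$ choose $\psi_i(x,c_i)\in\tp(d/M_{i+1})$ with $c_i\in M_{i+1}$ Kim-dividing over $M_i$; since there are only $|T|$ formulas and $|T|^+$ is regular, one $\psi$ occurs on a stationary set, and restricting to a continuous (club) subchain I may assume $\psi_i=\psi$ for all $i$. By Definition~\ref{kimfking}(3) each instance yields a Morley sequence over $M_i$ in $\tp(c_i/M_i)$ along which $\psi(x,-)$ is inconsistent, while $d\models\psi(x,c_i)$. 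The remaining task is to amalgamate these $|T|^+$ many local Kim-dividing witnesses into a single tree-indexed array witnessing $\omega^{<\omega}$-SOP$_1$ for $\psi$ (equivalently, the configuration of Fact~\ref{cr}), which I would carry out by a tree-indiscernibility/modeling argument applied to the tree of parameters assembled from the chain.

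The main obstacle is precisely this amalgamation step in (1)$\Rightarrow$(2): converting ``Kim-forking at every successor of a continuous $|T|^+$-chain'' into a genuine branching SOP$_1$ pattern. In an NSOP$_1$ theory the same data could instead be reassembled \emph{consistently} using symmetry and the independence theorem for $\ind^K$, so that no such chain could survive; it is therefore the presence of the chain that breaks those amalgamation properties and delivers the inconsistent branching characteristic of SOP$_1$. This transfer is the technical heart of the Kaplan--Ramsey--Shelah local-character theorem, and I would expect essentially all of the effort to lie there, the cardinal bookkeeping (regularity of $|T|^+$, stationarity, and continuity at limits) being routine by comparison.
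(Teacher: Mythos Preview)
The paper does not prove Fact~\ref{nsop1lc}; it is quoted as a black box from \cite{KRS}. (The paper does supply a proof of the closely related Fact~\ref{nsop1cblelc}, which is the SOP$_1\Rightarrow$``chain over sets'' direction, but not of the equivalence over models in Fact~\ref{nsop1lc}.) So there is no in-paper argument to compare your proposal against.

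On the merits of your sketch: your direction $\neg(1)\Rightarrow\neg(2)$ is essentially the argument the paper gives for Fact~\ref{nsop1cblelc}, transplanted from sets to models. The genuine extra work you gloss over is the step ``build a continuous increasing chain of $|T|$-sized models $M_i$ absorbing $(ab)_{<i}$ while keeping a Morley sequence over $M_i$ witnessing Kim-dividing of $\varphi(x,a_i)$.'' Over sets the witnessing sequence is the tail of the $b_j$'s, which is a coheir (finitely satisfiable) sequence over $(ab)_{<i}$; once you throw in an arbitrary model $M_i\supseteq (ab)_{<i}$ there is no reason that tail remains finitely satisfiable in $M_i$, nor that $a_i\equiv_{M_i}b_i$. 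In \cite{KRS} this is handled by building the models and the array together (via a Skolemized expansion and careful indiscernibility), not by first fixing the array and then ``absorbing'' it. Your sentence ``the homogeneity condition \ldots\ lets one spin the relevant parameter into a Morley sequence over $M_i$'' is exactly the place where a naive construction fails.

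For $\neg(2)\Rightarrow\neg(1)$ you correctly isolate the hard content and then defer it: ``this transfer is the technical heart of the Kaplan--Ramsey--Shelah local-character theorem.'' That is an accurate description, but it is not a proof. The pigeonhole reduction to a single $\psi$ and the continuity bookkeeping are fine; what is missing is the actual mechanism that turns $|T|^+$ many independent Kim-dividing instances over a continuous chain of models into a branching SOP$_1$ configuration (in \cite{KRS} this goes through Kim's lemma for coheir sequences and a careful tree construction, not merely ``tree-indiscernibility/modeling''). As written, your proposal for this direction is a restatement of what needs to be proved rather than a plan for proving it.
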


Indeed the following is implicitly shown in \cite{KRS} using Fact \ref{cr}.  We supply a proof for completeness.

\begin{Fact}\label{nsop1cblelc}
If $T$ has SOP$_1$ then for each infinite cardinal $\kappa$, there exist a finite tuple  $d$ and a continuous  increasing  sequence  $\la A_\alpha \mid \alpha <\kappa\ra$ of sets 
  such that 
  for each  $\alpha <\kappa$,   $|A_\alpha |\leq |\alpha| \cdot\omega$ and $d\depk_{A_{\alpha}}A_{\alpha+1}$.
\end{Fact}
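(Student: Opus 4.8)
The plan is to start from the combinatorial witness supplied by Fact \ref{cr}. Fix a formula $\varphi(x,y)$ with SOP$_1$ and take $\langle a_ib_i\mid i<\omega\rangle$ satisfying (a) $a_i\equiv_{(ab)_{<i}}b_i$, (b) $\{\varphi(x,a_i)\mid i<\omega\}$ consistent, and (c) $\{\varphi(x,b_i)\mid i<\omega\}$ $2$-inconsistent. By Ramsey together with compactness I first extract a countable $\emptyset$-indiscernible sequence $I=\langle a_ib_i\mid i<\omega\rangle$ of blocks that still satisfies (a), (b), (c): each of these is a property of the type of a finite increasing tuple of blocks, so it lies in the EM-type and is preserved by the extraction (in particular (a) now holds with any finitely many earlier blocks as the base).

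Next I realize the witness as a \emph{coheir} sequence. Let $\mathfrak p$ be a global type finitely satisfiable over the countable set $I$ extending an average type of $I$, chosen so that iterating it produces $J=\langle \tilde a_j\tilde b_j\mid j<\kappa\rangle$ with $I$ followed by $J$ indiscernible. Put $A_\alpha:=I\cup\{\tilde a_j\tilde b_j\mid j<\alpha\}$; this chain is increasing and continuous, and $|A_\alpha|\le\omega+|\alpha|=|\alpha|\cdot\omega$. Since (b) gives finite consistency of $\{\varphi(x,\tilde a_j)\mid j<\kappa\}$, compactness yields a finite $d$ realizing it, so $\varphi(x,\tilde a_\alpha)\in\tp(d/A_{\alpha+1})$. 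As Kim-dividing implies Kim-forking and $\tp(d/A_{\alpha+1})$ implies $\varphi(x,\tilde a_\alpha)$, it suffices to show that $\varphi(x,\tilde a_\alpha)$ Kim-divides over $A_\alpha$.

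To witness this I use the tail $\langle \tilde b_j\mid \alpha\le j<\kappa\rangle$. It is $A_\alpha$-indiscernible, being a fixed-pattern subsequence of the tail of an indiscernible sequence over an initial segment $A_\alpha=I\,\tilde a\tilde b_{<\alpha}$. It is moreover a nonforking Morley sequence over $A_\alpha$: each block $\tilde a_j\tilde b_j$ realizes $\mathfrak p\restriction(I\,\tilde a\tilde b_{<j})$, which is finitely satisfiable over $I$, hence over the larger base $A_\alpha$, hence does not fork over $A_\alpha$, giving $\tilde b_j\ind_{A_\alpha}\tilde b_{[\alpha,j)}$. Finally, by (a) propagated along the indiscernible sequence together with tail-indiscernibility, every $\tilde b_j$ ($j\ge\alpha$) has the same type over $A_\alpha$ as $\tilde a_\alpha$, while (c) makes $\{\varphi(x,\tilde b_j)\mid j\ge\alpha\}$ $2$-inconsistent, so inconsistent. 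Thus this Morley sequence witnesses that $\varphi(x,\tilde b_\alpha)$, and hence $\varphi(x,\tilde a_\alpha)$ (as $\tilde b_\alpha\equiv_{A_\alpha}\tilde a_\alpha$), Kim-divides over $A_\alpha$, so $d\depk_{A_\alpha}A_{\alpha+1}$.

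I expect the main obstacle to be exactly the gap between dividing and Kim-dividing: a bare indiscernible witness shows only that $\varphi(x,\tilde a_\alpha)$ divides over $A_\alpha$, whereas Kim-dividing demands a \emph{nonforking} Morley sequence over the growing base, and tails of Morley sequences need not remain Morley over initial segments because base monotonicity fails in general. The device that removes this obstacle is to take the witness to be finitely satisfiable over a fixed \emph{countable} $I$: finite satisfiability over $I$ survives enlarging the base to $A_\alpha=I\,\tilde a\tilde b_{<\alpha}$, which simultaneously keeps the tails nonforking-Morley and keeps $|A_\alpha|\le|\alpha|\cdot\omega$.
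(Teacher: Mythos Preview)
Your argument is correct and follows the same underlying idea as the paper's proof: use the Chernikov--Ramsey witness from Fact~\ref{cr}, stretch it to an indiscernible sequence by compactness, and exploit that tails of such a sequence are finitely satisfiable (hence nonforking Morley) over initial segments, so that the $2$-inconsistency of the $b$-column together with $a_i\equiv_{(ab)_{<i}}b_i$ forces $\varphi(x,a_i)$ to Kim-divide over each initial segment.

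The only real difference is bookkeeping. The paper indexes the indiscernible sequence by $\mathbb{Z}\cdot\kappa$ and sets $A_\alpha=\{a_ib_i\mid i\in\mathbb{Z}\cdot\alpha\}$ (with an ad hoc shift for $\alpha<\omega$); the Morley witness at stage $\alpha$ is the forward sequence of $b_j$'s lying \emph{inside} the new $\mathbb{Z}$-block $A_{\alpha+1}\smallsetminus A_\alpha$, which is finitely satisfiable over $A_\alpha$ because the block has no least element. You instead fix a countable initial segment $I$ once and for all and take the Morley witness to be the tail $\langle\tilde b_j\mid j\ge\alpha\rangle$, finitely satisfiable over $I\subseteq A_\alpha$. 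Your version makes the uniform source of nonforking (a single global coheir over $I$) more explicit and avoids the $\mathbb{Z}$-block device; the paper's version has the minor aesthetic advantage that the witnessing Morley sequence lives inside $A_{\alpha+1}$. Either way the size bound $|A_\alpha|\le|\alpha|\cdot\omega$ and continuity are immediate.
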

\begin{proof} Assume $T$ has SOP$_1$.
Given an infinite $\kappa$, by using compactness, there are a formula $\varphi(x,y)$ and  an indiscernible 
 sequence $\la a_ib_i\mid i\in \mathbb{Z}\cdot \kappa\ra$ satisfying Fact \ref{cr}.  Namely, $a_i\equiv_{(ab)_{<i}} b_i$ for all $i\in\mathbb{Z}\cdot \kappa$, 
$\{\varphi(x,a_i)\mid i\in \mathbb{Z}\cdot \kappa\}$ is realized by say $d$,  and 
$\{\varphi(x,b_i)\mid i\in\mathbb{Z}\cdot \kappa\}$ is $2$-inconsistent  \ (*). 

Now for $n <\omega $, let $A_n=\{a_ib_i\mid i \in \mathbb{Z}\cdot (n+1)\}$, and  for $\omega\leq \alpha <\kappa$, let $A_\alpha=\{a_ib_i\mid i \in \mathbb{Z}\cdot \alpha\}$. Then clearly $\la A_\alpha \mid \alpha\in \kappa\ra$ is a continuous  increasing sequence with $|A_\alpha|\leq \omega\cdot |\alpha|$.   Moreover 
for 
each $b_i\in A_{\alpha+1}\setminus A_{\alpha}$,  the  countable sequence  $I_{b_i}(\subset  A_{\alpha+1}\setminus A_{\alpha})$ of successive $b_j$'s starting from $b_i$
is  a finitely satisfiable indiscernible  (so  Morley) sequence in $\tp(b_i/A_\alpha)$. Thus by (*), 
$\varphi(x,b_i)$ Kim-divides over $A_\alpha$. Then since  $a_i\equiv_{A_\alpha} b_i$, again by (*)
we have that $d\depk_{A_{\alpha}}a_i$. Note that  $a_i\in A_{\alpha+1}\setminus A_{\alpha}$.
Hence  $d\depk_{A_{\alpha}}A_{\alpha+1}$ as wanted.
\end{proof}

Contrary to Fact \ref{nsop1lc}(2), as in the following example,   in NSOP$_1$ $T$, there can exist  a {\em{non-continuous}} increasing Kim-forking sequence of length $|T|^+$ of $\leq |T|$-sized sets, and  arbitrary lengths {\em continuous} increasing Kim-forking sequences. 

\begin{Example}\label{paraequi} \cite{CKR}
 Let $T$ be the theory of 
  the  random parametrized equivalence relations, i.e.,  the  the Fra\"{i}ss\'{e}
limit of the class of finite models with two sorts $(P,E)$ and a ternary relation
$\, \sim\, $ on $\, P\times P\times E\, $ such that, for each $e\in E$,
$x\sim_ey$ forms an equivalence relation on $P$.
 
 So in a model of $T$,  there are two sorts $P$ and $E$ as described above.
  Let $d\in P$. Given a cardinal $\kappa$, choose distinct
 $e_i \in E$, and $d_i\in P$ ($i<\kappa)$ such that $d\sim_{e_i}d_i$, but $d_j\not\sim_{e_k} d_i$ for each  $j< i$ and 
 $k\leq i$. 
 Let $D_i=((ed)_{ <i})e_i$.  Note that the sequence $\la D_i\mid i<\kappa\ra $ is increasing but {\em not continuous} (for example,  $ D_{<\omega} \subsetneq D_{\omega}$). Notice further that $d\depk_{D_i}d_i$, so   $d\depk_{D_i}D_{i+1}$ for each $i<\kappa$. 
 
Moreover,  there is a  continuous increasing  Kim-forking sequence of length 
$\kappa$ of $\kappa$-sets.  We work with the same chosen elements above. Let $C:=\{e_i\mid i<\kappa\}\subset E$, and let
  $C_i:=Cd_{<i}$. Clearly $\la C_i\mid i<\kappa\ra$ is a continuous  increasing sequence of $\kappa$-sets.  
 Now for each $i<\kappa$, it follows $d\depk_{C_i}d_i$, and hence     $d\depk_{C_i}C_{i+1}$. 
\end{Example}

Now we can ask whether such phenomena happen in any non-simple NSOP$_1$ $T$.  We show that indeed in any theory with TP$_2$, such sequences can be found. 
The following fact is well-known and a proof can
 be found for example in \cite{KKS}.  Recall that an array $\la a_{ij}\mid i<\kappa, j<\lambda \ra$ is said to be {\em indiscernible}\footnote{In some literature this notion is called {\em strongly indiscernible}} over $A$ if for 
 $L_i:=\la a_{ij}\mid j<\lambda \ra$,  $\la L_i\mid i<\kappa\ra$ is $A$-indiscernible, and $A$-mutually indiscernible (i.e., 
  each $L_i$ is indiscernible over $\bigcup \{L_j\mid j(\ne i)<\kappa\}A$).

\begin{Fact}\label{tp2} The following are equivalent.
\be\item
$\varphi(x,y)$ has TP$_2$.

\item Let $\kappa$ be an infinite cardinal. There is an indiscernible  array $\la a_{ij}\mid i<\kappa, j<\omega+\omega \ra$ such that 
\be\item for each $i<\kappa$, $\{\varphi(x,a_{ij})\mid j<\omega+\omega\}$ is $2$-inconsistent,  and

\item for any $f:\kappa \to \omega+\omega$, $\{\varphi(x, a_{if(i)})\mid i<\kappa\}$ is consistent. 

\ee
\ee
\end{Fact}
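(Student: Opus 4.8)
The plan is to prove the two implications separately, with essentially all of the work in $(1)\Rightarrow(2)$. The implication $(2)\Rightarrow(1)$ is immediate: taking the array for $\kappa=\omega$ and restricting to the rows $i<\omega$ and the first $\omega$ columns $j<\omega$, condition (a) says each restricted row is still $2$-inconsistent and condition (b), applied to functions $f$ with values below $\omega$, says every path through the restricted array is consistent, which is exactly the data witnessing $\mathrm{TP}_2$.

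For $(1)\Rightarrow(2)$, fix an infinite cardinal $\kappa$. First I would stretch a $\mathrm{TP}_2$ witness to the desired dimensions by compactness. Starting from an array $\langle a^i_j\mid i,j<\omega\rangle$ witnessing $\mathrm{TP}_2$, I consider the partial type in the variables $\langle y^i_j\mid i<\kappa,\ j<\omega+\omega\rangle$ which, for every $i$ and every $j\neq j'$, contains $\neg\exists x\,(\varphi(x,y^i_j)\wedge\varphi(x,y^i_{j'}))$, and for every finite choice function $f$ selecting one column in each of finitely many rows, contains $\exists x\,\bigwedge_i\varphi(x,y^i_{f(i)})$. Any finite fragment mentions finitely many rows and columns, and is satisfied inside the original array after identifying those rows and columns with initial ones of $\omega\times\omega$: the inconsistency conditions hold because every row of the original is $2$-inconsistent, and each path-consistency condition holds because finite paths in the original are consistent. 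By compactness the type is realized in $\CM$, yielding an array $\langle a^i_j\mid i<\kappa,\ j<\omega+\omega\rangle$ with each row $2$-inconsistent and every path consistent.

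Next I would pass to a mutually indiscernible array by the standard extraction (iterated Erd\H{o}s--Rado, then compactness; see \cite{KKS}), after first stretching to index sets large enough for the Ramsey bounds. This produces an array $\langle b^i_j\mid i<\kappa,\ j<\omega+\omega\rangle$ that is \emph{indiscernible} in the sense required before Fact \ref{tp2}---the sequence of rows $\langle L_i\mid i<\kappa\rangle$, with $L_i=\langle b^i_j\mid j<\omega+\omega\rangle$, is indiscernible and each $L_i$ is indiscernible over $\bigcup_{j\neq i}L_j$---and which is finitely based on $\langle a^i_j\rangle$: for every finite $w\subseteq\kappa\times(\omega+\omega)$ the tuple $\langle b^i_j\mid (i,j)\in w\rangle$ realizes a type already realized by a subtuple of $\langle a^i_j\rangle$ of the same shape (rows matched, column order preserved inside each row). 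The two $\mathrm{TP}_2$ properties then transfer across this modeling property. For (a): a within-row pair $(b^i_j,b^i_{j'})$ has the same type as a within-row pair of $\langle a^i_j\rangle$, and since $\neg\exists x\,(\varphi(x,y_1)\wedge\varphi(x,y_2))$ is a condition on $\tp(y_1,y_2)$, the $2$-inconsistency transfers. For (b): any finite path $\langle b^i_{f(i)}\mid i\in s\rangle$ has the same type as a finite path of $\langle a^i_j\rangle$, hence is consistent; as this holds for all finite $s$, compactness gives consistency of $\langle b^i_{f(i)}\mid i<\kappa\rangle$ in $\CM$. Renaming $b$ to $a$ gives the array required in (2).

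The main obstacle is the extraction step: producing an array that is simultaneously indiscernible as a sequence of rows and mutually indiscernible within rows, while keeping the modeling property, requires the iterated Ramsey argument together with index sets chosen large enough for the Erd\H{o}s--Rado estimates before pulling back by compactness. This is the one genuinely technical ingredient, and it is exactly the point covered by \cite{KKS}.
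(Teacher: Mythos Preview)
Your outline is correct and matches the standard argument. Note that the paper does not actually prove this statement: it is recorded as a well-known Fact with a reference to \cite{KKS} for a proof, and your sketch---compactness to stretch the array, then the iterated Erd\H{o}s--Rado/Ramsey extraction of a (strongly) indiscernible array with the modeling property, citing \cite{KKS} for exactly that step---is precisely the approach taken there.
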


\begin{Proposition}\label{kfkseq}
 Assume $T$ has TP$_2$. Let $\kappa$ be an infinite cardinal.
\be\item
There are a finite tuple $d$ and  an increasing {\em non-continuous} sequence of sets $A_i$ $(i<\kappa)$ of size $|i|\cdot\omega(<\kappa)$ such that $d\dep^K_{A_i}A_{i+1}$ for each $i<\kappa$. In particular there is an increasing countable  sequence of countable sets $B_i$ such that $d\dep^K_{B_i}B_{i+1}$ for each $i<\omega$. 

\item  There are a finite tuple $d$ and  an increasing {\em continuous} sequence of sets $E_i$ $(i<\kappa)$ of size $\kappa$ such that $d\dep^K_{E_i}E_{i+1}$ for each $i<\kappa$.
\ee
\end{Proposition}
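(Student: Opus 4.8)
The plan is to read both Kim-forking chains directly off a mutually indiscernible TP$_2$-array, choosing the base sets so as to reproduce the two phenomena of Example~\ref{paraequi}: introducing the ``selector'' of the $i$-th row just in time produces a small, non-continuous chain, while keeping all selectors in the base from the outset produces a large, continuous one.

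First I would apply Fact~\ref{tp2} to a formula $\varphi(x,y)$ witnessing TP$_2$, obtaining for the given infinite $\kappa$ an indiscernible array $\langle a_{ij}\mid i<\kappa,\ j<\omega+\omega\rangle$ whose rows $L_i:=\langle a_{ij}\mid j<\omega+\omega\rangle$ are mutually indiscernible, each row being $\varphi$-$2$-inconsistent while every transversal is consistent. Write $I_i:=\langle a_{ij}\mid j<\omega\rangle$ and $J_i:=\langle a_{ij}\mid \omega\le j<\omega+\omega\rangle$ for the two halves of $L_i$ and set $c_i:=a_{i,\omega}$, the first entry of $J_i$. Taking the constant transversal $f\equiv\omega$ in Fact~\ref{tp2}(2)(b), the set $\{\varphi(x,c_i)\mid i<\kappa\}$ is consistent; let the finite tuple $d$ realise it, so $d\models\varphi(x,c_i)$ for every $i$.

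For (1) I would put $A_i:=\bigcup_{i'<i}L_{i'}\cup I_i$ and for (2) put $E_i:=\bigcup_{i'<i}L_{i'}\cup\bigcup_{i'<\kappa}I_{i'}$. Both sequences are increasing, with $|A_i|=|i|\cdot\omega<\kappa$ and $|E_i|=\kappa$, and in each case $c_i\in L_i\subseteq A_{i+1}$ (resp.\ $E_{i+1}$) while $c_i\notin A_i$ (resp.\ $E_i$), since the only part of $L_i$ lying in the base is $I_i$. The contrast is entirely in the limit behaviour: $\bigcup_{i<\delta}A_i=\bigcup_{i'<\delta}L_{i'}$ omits $I_\delta$, so the first chain jumps at every limit $\delta$ and is non-continuous, whereas all blocks $I_{i'}$ already belong to every $E_i$, so $\bigcup_{i<\delta}E_i=E_\delta$ and the second chain is continuous. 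Specialising (1) to $\kappa=\omega$ yields the countable chain $B_i:=A_i$ of the ``in particular'' clause.

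It then remains to verify $d\depk_{A_i}A_{i+1}$ (and likewise for $E$). Since $c_i\in A_{i+1}$ and $d\models\varphi(x,c_i)$, it suffices to show that $\varphi(x,c_i)$ Kim-divides over $A_i$, and the candidate witness is the tail $J_i$ itself: it starts at $c_i$ and is $\varphi$-$2$-inconsistent by construction. As $L_i$ is indiscernible over $\bigcup_{i'<i}L_{i'}$ by mutual indiscernibility, its final segment $J_i$ is indiscernible over $\bigcup_{i'<i}L_{i'}\cup I_i=A_i$ (a final segment of an indiscernible sequence stays indiscernible once the preceding entries are added to the base); in particular every entry of $J_i$ realises $\tp(c_i/A_i)$. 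The main obstacle, and the only non-formal step, is the remaining clause of Definition~\ref{kimfking}(2): that $J_i$ be a \emph{Morley} sequence over $A_i$, i.e.\ $a_{i,\omega+k}\ind_{A_i}a_{i,[\omega,\omega+k)}$ for each $k$. Indiscernibility alone yields only that $\varphi(x,c_i)$ \emph{divides} over $A_i$, which is strictly weaker than Kim-dividing; to upgrade it I would arrange finite satisfiability exactly as in the proof of Fact~\ref{nsop1cblelc}, building the array by a coheir construction so that each column sequence is finitely satisfiable in the part of the base lying below it, whence its tails are coheir, hence Morley, over $A_i$ (resp.\ $E_i$). Granting this, $\varphi(x,c_i)$ Kim-divides over $A_i$ via $J_i$, giving $d\depk_{A_i}A_{i+1}$; the identical argument over $E_i$ (where again $E_i\cap L_i=I_i$ and $J_i\cap E_i=\emptyset$) proves (2). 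As the referee observes, the conclusion may alternatively be quoted from \cite{Ch}.
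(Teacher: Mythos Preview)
Your overall architecture is exactly the paper's: read both chains off a mutually indiscernible TP$_2$-array, let $d$ realise a transversal, and witness Kim-dividing of $\varphi(x,c_i)$ over $A_i$ (resp.\ $E_i$) by the row-tail $J_i$. The slight difference in the choice of $A_i$ (you take $\bigcup_{i'<i}L_{i'}\cup I_i$, the paper takes $\{I_k\mid k\le i\}\cup\{c_k\mid k<i\}$) is harmless.

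The genuine gap is precisely the step you flag as ``the main obstacle,'' and your proposed fix does not close it. With rows indexed by $\omega+\omega$ and $J_i=\langle a_{i,\omega+k}\mid k<\omega\rangle$, the element $a_{i,\omega+k}$ lies \emph{above} every parameter from $I_i$ and above $a_{i,\omega},\ldots,a_{i,\omega+k-1}$ in the order of $L_i$; hence no element of $A_i\cap L_i=I_i$ can realise $\tp(a_{i,\omega+k}/A_i\,a_{i,[\omega,\omega+k)})$, and $J_i$ is not a coheir sequence over $A_i$. Your suggestion to ``build the array by a coheir construction so that each column sequence is finitely satisfiable'' is off target: the witnessing sequences are row segments, not columns, and no additional hypothesis on the array is required.

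The paper's remedy is a one-line reindexing: take the rows over $\omega+\omega^*$ (equivalently, reverse the second half), set $c_i:=a_{i,0^*}$ and $J_i:=\langle a_{i,k^*}\mid k<\omega\rangle$. Now $a_{i,k^*}$ sits \emph{between} $I_i$ and the already-listed $a_{i,0^*},\ldots,a_{i,(k-1)^*}$ in the order of $L_i$; by indiscernibility of $L_i$ over the other rows, any sufficiently large element of $I_i\subseteq A_i$ realises its type over $A_i\cup\{a_{i,0^*},\ldots,a_{i,(k-1)^*}\}$. Thus $J_i$ is finitely satisfiable in $A_i$, hence Morley over $A_i$, and the rest of your argument goes through verbatim for both (1) and (2).
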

\begin{proof}
(1)  Due to Fact \ref{tp2} and compactness, there are a formula $\varphi(x,y)$ and  an array $\la a_{ij}\mid i<\kappa,\  j\in \omega+\omega^* \ra$ 
 where $\omega^*:=\{i^*\mid i\in \omega\}$ with the reversed order of $\omega$ (so for $i^*, j^*\in \omega^*$, we have $n<i^*$ for all $n\in\omega$,  and $j^*<i^*$ if $i<j$) such that 
\be\item[(a)] for each $i<\kappa$, $\{\varphi(x,a_{ij})\mid j\in \omega+\omega^*\}$ is $2$-inconsistent,  

\item[(b)] for any $f:\kappa \to \omega+\omega^*$, $\{\varphi(x, a_{if(i)})\mid i<\kappa\}$ is consistent, and

\item[(c)] the array is mutually indiscernible, i.e.,  for any $i<\kappa$, 
$L_i:=\la a_{ij}\mid j\in \omega+\omega^*\ra$ is indiscernible over $\bigcup \{L_j\mid j(\ne i)<\kappa\}$. 
\ee

 For each $i\in\kappa$, we let $I_i:=\la a_{ij}\mid j<\omega\ra$, and let $J_i:=\la a^*_{ij}\mid  j<\omega\ra$ where $a^*_{ij}=a_{ij^*}$ with $j^*\in\omega^*$,  so as a set  $L_i=I_i\cup J_i$. 

Now due to (b), there is  $d\models \{\varphi(x, a^*_{i 0})\mid i<\kappa\}$. Put
$A_i=\{I_k\mid k\leq i\}\cup \{a^*_{k0}\mid k<i\}$.  Then $|A_i|=|i|\cdot\omega$. Now  by (c), $J_i$ is finitely satisfiable, so Morley over $A_i$. Hence, by (a) we have 
$$d\dep^K_{A_i}a^*_{i0} \mbox{ \ and \ } d\dep^K_{A_i}A_{i+1}$$
for each $i<\kappa$. Notice that the sequence $\la A_i\mid i<\kappa\ra$ is not continuous, for example $A_{<\omega}= A_\omega\setminus I_\omega\subsetneq A_\omega$.

For the second statement of (1), simply put $B_i=A_i$ for $i<\omega$. 

\medskip

(2) We keep use the same $d$ in (1).  Let $E:=I_{<\kappa}$, and for $i<\kappa$ let
$E_i:= E\cup\{a^*_{k0}\mid k<i\}$.  Now due to (c) again, for $i\in\kappa$, 
$J_i$ is Morley over $E_i$. 
Therefore we have
$$d\dep^K_{E_i}a^*_{i0} \mbox{ \ and hence  \ } d\dep^K_{E_i}E_{i+1}.$$
as wanted. Note that clearly $\la E_i\mid i<\kappa\ra$ is a continuous increasing sequence with each $|E_i|=\kappa$. 
\end{proof}

As said before Fact \ref{nsop1lc}, the referee of this paper points out that Proposition  \ref{kfkseq} directly follows from 
the proof of Lemma 4.7 in \cite{Ch} as well. Thus the above proof might be considered as the one describing  
  the proposition as  a straightforward consequence  of Fact \ref{tp2}.    

\medskip

Now we recall that $T$ is {\em supersimple} if for any finite $a$, and  a set $A$, there is a finite subset $A_0$ of $A$ such that
$a\ind_{A_0}A$.  As is well-known $T$ is supersimple iff there does not exist a countably infinite forking chain (see for example, \cite{K1}). The following theorem says that the same holds with a countably infinite Kim-forking chain. 

\begin{Theorem}  \label{ssimple} The following are equivalent.
\be\item 
$T$ is supersimple.

\item  There do not exist finite $d$ and an increasing sequence
of  sets $A_i$ ($i<\omega$) such that $d\dep^K_{A_i} A_{i+1}$ for each $i<\omega$. 

\item There do not exist finite $d$ and an increasing sequence
of  countable sets $A_i$ ($i<\omega$) such that $d\dep^K_{A_i} A_{i+1}$ for each $i<\omega$. 

\ee
\end{Theorem}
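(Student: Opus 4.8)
The plan is to prove Theorem \ref{ssimple} by establishing the chain of implications (1)$\Rightarrow$(2)$\Rightarrow$(3)$\Rightarrow$(1). The implications (1)$\Rightarrow$(2) and (2)$\Rightarrow$(3) are the easy directions: for (2)$\Rightarrow$(3) the contrapositive is immediate, since a witnessing chain of countable sets is in particular a witnessing chain of sets. For (1)$\Rightarrow$(2), I would argue the contrapositive. Suppose there exist a finite $d$ and an increasing sequence $\la A_i\mid i<\omega\ra$ with $d\dep^K_{A_i}A_{i+1}$ for each $i$. Since $\ind\Rightarrow\ind^K$ (recorded in Definition \ref{kimfking}(3)), $d\dep^K_{A_i}A_{i+1}$ forces $d\dep_{A_i}A_{i+1}$, i.e. $\tp(d/A_{i+1})$ forks over $A_i$. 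This directly exhibits a countably infinite forking chain, so by the well-known characterization of supersimplicity (cited in the excerpt via \cite{K1}) $T$ is not supersimple.

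The crux is (3)$\Rightarrow$(1), which I would again prove contrapositively: assuming $T$ is not supersimple, I want to build a finite $d$ and an increasing chain of \emph{countable} sets along which $\ind^K$ fails at each step. The natural case split is whether $T$ is simple or not. If $T$ is not simple, then $T$ has TP, hence by Fact \ref{sop2=tp1}(1) it has TP$_1$ or TP$_2$. In the TP$_2$ case I can invoke Proposition \ref{kfkseq}(1) directly, whose ``in particular'' clause hands me exactly a finite $d$ and an increasing countable sequence of countable sets $B_i$ with $d\dep^K_{B_i}B_{i+1}$, finishing this subcase. The TP$_1$ (equivalently SOP$_2$, hence SOP$_1$) case is handled by Fact \ref{nsop1cblelc} applied with $\kappa=\omega$: it produces a finite $d$ and a continuous increasing sequence $\la A_\alpha\mid\alpha<\omega\ra$ of sets with $|A_\alpha|\leq|\alpha|\cdot\omega$ (so countable) and $d\dep^K_{A_\alpha}A_{\alpha+1}$, which is again exactly a witness for the negation of (3).

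The remaining and genuinely substantive subcase is when $T$ is simple but not supersimple. Here I cannot appeal to any tree property, so I must work with forking directly. Since in any simple theory $\ind=\ind^d=\ind^K$ (stated in the excerpt, citing \cite{K}), it suffices to produce a countable increasing chain of countable sets witnessing \emph{forking} non-independence, and then reinterpret it as Kim-forking non-independence for free. Because $T$ is simple but not supersimple, local character fails for $\ind$ at $\aleph_0$: there is a finite $d$ and a set over which $\tp(d/\cdot)$ has no finite base, which by the standard argument yields an increasing sequence $\la A_i\mid i<\omega\ra$ with $\tp(d/A_{i+1})$ forking over $A_i$ for every $i$. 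I would then observe that the parameters involved can be taken countable (each step only adds a finite tuple realizing a forking extension, so one may close off under a countable set at each stage while preserving the forking at every step), giving countable $A_i$. Translating via $\ind=\ind^K$ gives $d\dep^K_{A_i}A_{i+1}$, completing (3)$\Rightarrow$(1).

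The main obstacle I anticipate is precisely this simple-but-not-supersimple subcase: ensuring that the forking chain extracted from the failure of local character can be taken to consist of \emph{countable} sets, while maintaining genuine forking (not merely non-nonforking) at each successive step. The delicacy is that a naive construction might add too much at some stage and accidentally restore independence, so I would be careful to add only a finite realization of a forking extension at each step and then take a countable closure, verifying that forking over the smaller set is preserved. Everything else reduces to quoting Proposition \ref{kfkseq}, Fact \ref{nsop1cblelc}, Fact \ref{sop2=tp1}, and the identity $\ind=\ind^K$ in simple theories together with the classical characterization of supersimplicity by finite forking chains.
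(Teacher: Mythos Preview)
Your proposal is correct and follows essentially the same approach as the paper: the implications (1)$\Rightarrow$(2)$\Rightarrow$(3) are handled exactly as you describe, and for (3)$\Rightarrow$(1) the paper also argues contrapositively with a three-way case split (simple, NSOP$_1$ but not simple, and SOP$_1$), invoking Proposition~\ref{kfkseq} and Fact~\ref{nsop1cblelc} in the non-simple cases and the identity $\ind=\ind^K$ in the simple case. Your dichotomy TP$_1$/TP$_2$ for the non-simple case is just a relabeling of the paper's NSOP$_1$-not-simple/SOP$_1$ split, and your worry about countability in the simple-not-supersimple subcase is unnecessary: the paper simply cites this as well-known, and indeed the standard construction already produces a chain of finite (hence countable) sets.
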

\begin{proof} (1)$\Rightarrow$(2) is well-known as said before this theorem, and (2)$\Rightarrow$(3) is obvious.
\medskip 

(3)$\Rightarrow$(1)\  We prove this  contrapositively. Suppose $T$ is not supersimple. If $T$ is simple, then since $\ind=\ind^K$, 
again it is well-known that there exist such a tuple and a sequence described in (3). If $T$ is NSOP$_1$ but not simple, then $T$ has TP$_2$ and Proposition \ref{kfkseq} says there are such a tuple and a sequence.  
If $T$ has SOP$_1$ then the existence of such a tuple and a sequence is guaranteed in Fact \ref{nsop1cblelc}.
\end{proof}

As pointed out in \cite{KR1},   $T$ is  NSOP$_1$ iff   there do not exist 
 tuples  $a_i$  ($i<\omega$), a model   $M$, and an $\CL$-formula $\varphi(x,y)$ 
such that for each $i<\omega$, 
  $a_i\equiv_M a_0$,  $a_i\indk_M a_{<i}$,   
  $\varphi(x,a_i)$ Kim-divides over $M$, and $\{\varphi(x, a_i)\mid i<\omega\}$ is consistent.  However only a slightly  weaker condition always holds in any $T$ having TP$_2$.  

\begin{Proposition}\label{lcwt}
Assume $\varphi(x,y)$ has TP$_2$. Then for each infinite $\kappa$, there are a set  $A$ with $|A|\leq\kappa$, and finite tuples $d$, $c_i$  ($i<\kappa$) 
such that 
\be
\item $d\models \varphi(x, c_i)$, 

\item   $\varphi(x,c_i)$ Kim-divides over $A$ (so 
 $d\depk_A c_i$) witnessed by a Morley sequence $(c_i\in)J_i$ over $A$ 
 with $J_i\equiv J_0$ (so $c_i\equiv c_0$), and

\item  $ c_i\ind_A\{c_{k}\mid k <\kappa, \ k\ne i\}$ (so  $ c_i\indk_A\{c_{k}\mid k<\kappa, \ k\ne i\}$).
\ee 
\end{Proposition}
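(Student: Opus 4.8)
The plan is to recycle the mutually indiscernible array built for Proposition \ref{kfkseq}, but to fix a \emph{single} base $A$ that simultaneously serves every $c_i$. Concretely, by Fact \ref{tp2} and compactness I would take an array $\langle a_{ij}\mid i<\kappa,\ j\in\omega+\omega^*\rangle$ (with $\omega^*$ the reverse of $\omega$) satisfying conditions (a)--(c) exactly as in the proof of Proposition \ref{kfkseq}: each row's $\varphi$-instances are $2$-inconsistent, every choice function gives a consistent set of instances, and $\langle L_i\mid i<\kappa\rangle$ is indiscernible and mutually indiscernible, where $L_i=\langle a_{ij}\mid j\in\omega+\omega^*\rangle$. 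Writing $I_i:=\langle a_{ij}\mid j<\omega\rangle$ and $J_i:=\langle a^*_{ij}\mid j<\omega\rangle$ with $a^*_{ij}=a_{ij^*}$, we have $L_i=I_i\cup J_i$ with every element of $J_i$ lying strictly after all of $I_i$ in the ordering of $L_i$. I put $c_i:=a^*_{i0}$ and, using condition (b) with the constant choice $j=0^*$, fix $d\models\{\varphi(x,c_i)\mid i<\kappa\}$. Finally I set $A:=I_{<\kappa}=\bigcup_{k<\kappa}I_k$, so that $|A|\le\kappa$, and item (1) is immediate.

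For item (2) I would show each $J_i$ is a Morley sequence over $A$. Writing $B_i:=\bigcup_{k\neq i}L_k$, mutual indiscernibility gives that $L_i$ is indiscernible over $B_i$; since $J_i$ is a final segment of $L_i$ and $I_i$ an infinite initial segment, the standard fact that a final segment of an indiscernible sequence stays indiscernible over an initial segment yields that $J_i$ is indiscernible over $B_i\cup I_i\supseteq A$. Moreover, since each finite tuple from $J_i$ consists of elements lying after the infinite initial segment $I_i$ of the $B_i$-indiscernible sequence $L_i$, such a type (over $B_i\cup I_i\cup J_i(<n)$) is finitely satisfiable in $I_i\subseteq A$, realized by tuples from $I_i$ with sufficiently large index; hence $J_i$ is a coheir, and in particular a Morley, sequence over $A$. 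By condition (a), $\{\varphi(x,a^*_{ij})\mid j<\omega\}$ is $2$-inconsistent, so $\varphi(x,c_i)$ Kim-divides over $A$ as witnessed by $J_i\ni c_i$, giving $d\depk_A c_i$; and indiscernibility of $\langle L_i\rangle$ gives $J_i\equiv J_0$, whence $c_i\equiv c_0$.

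For item (3) I would argue by finite satisfiability once more. The parameter set $A\cup\{c_k\mid k\neq i\}=I_{<\kappa}\cup\{a^*_{k0}\mid k\neq i\}$ is contained in $I_i\cup B_i$, and $c_i=a^*_{i0}$ is an element of $L_i$ lying after the infinite initial segment $I_i$; hence $\tp(c_i/A\cup\{c_k\mid k\neq i\})$ is finitely satisfiable in $I_i\subseteq A$. A coheir does not fork, so $c_i\ind_A\{c_k\mid k\neq i\}$, and $\ind\Rightarrow\indk$ (Definition \ref{kimfking}) then yields the Kim-independence as well.

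The only point that goes beyond Proposition \ref{kfkseq}, and the place requiring genuine care, is that one and the same base $A=I_{<\kappa}$ must at once witness Kim-dividing of \emph{every} $\varphi(x,c_i)$ and make each $c_i$ nonforking-independent from the whole set of the remaining $c_k$, even though $A$ contains each row's own initial segment $I_i$ together with all sibling segments $I_k$. I would reduce both requirements to the single observation that $c_i$ and $J_i$ are ``late'' elements of the $B_i$-indiscernible sequence $L_i$ with infinite initial part $I_i$, so that all the relevant types are finitely satisfiable in $I_i$; the coheir $\Rightarrow$ Morley and coheir $\Rightarrow$ nonforking implications are then standard. I expect the final/initial-segment indiscernibility fact to be the one step worth stating explicitly, since it is precisely what allows $A$ to swallow $I_i$ without destroying the Morley-ness of $J_i$.
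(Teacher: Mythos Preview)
Your proposal is correct and follows essentially the same approach as the paper: the paper also sets $A:=I_{<\kappa}$ and $c_i:=a^*_{i0}$, and then argues (tersely, citing only condition~(c)) that each $J_i$ is a Morley sequence over $A$ and that $\tp(c_i/A\{c_k\mid k\neq i\})$ is finitely satisfiable in $A$. You have simply spelled out in more detail the initial-segment/coheir reasoning that the paper leaves implicit.
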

\begin{proof} 
As in the proof of Proposition \ref{kfkseq},  there is  an indiscernible  array $\la a_{ij}\mid i<\kappa,\  j\in \omega+\omega^* \ra$ 
 such that 
\be\item[(a)] for each $i<\kappa$, $\{\varphi(x,a_{ij})\mid j\in \omega+\omega^*\}$ is $2$-inconsistent,  

\item[(b)] for any $f:\kappa \to \omega+\omega^*$, $\{\varphi(x, a_{if(i)})\mid i<\kappa\}$ is consistent, and

\item[(c)]   for any $i<\kappa$, 
$L_i=\la a_{ij}\mid j\in \omega+\omega^*\ra$ is indiscernible over $\bigcup \{L_j\mid j<\kappa, \  j\ne i\}$. 
\ee

Again for each $i\in\kappa$, let $I_i=\la a_{ij}\mid j<\omega\}$,  and   $J_i=\la a^*_{ij}\mid  j<\omega\ra$ where $a^*_{ij}=a_{ij^*}$ with $j^*\in\omega^*$. We further let $c_i:= a^*_{i 0}$. Now by (b), there is 
$d\models \{\varphi(x, c_i)\mid i<\kappa\}$. 

We now put $A:=I_{<\kappa}$, so $|A|=\kappa$. Now due to (c), each $J_i$ is a Morley sequence over  $A$ , and 
$\tp(c_i/A\{c_{k}\mid k <\kappa, \ k\ne i\})$ is finitely satisfiable in $A$. Hence (3) follows, and (2) follows as well due to (a).
\end{proof}

\end{document}